\documentclass[12pt,leqno,amscd, amsfonts, amssymb,pstricks,verbatim]{amsart}
\usepackage{mathrsfs}
\usepackage{amsmath}
\usepackage{hyperref}
\usepackage{bookmark}

\oddsidemargin -.1in \evensidemargin -.1in \textwidth 6.5in
\textheight 8.2in
\linespread{1.3}

\def\ad{{\rm ad}}

\def\Z{\mathbb{Z}}
\def\N{\mathbb{N}}

\def\C{\mathbb{C}}

\numberwithin{equation}{section}
\newtheorem{theo}{Theorem}[section]
\newtheorem{defi}[theo]{Definition}
\newtheorem{coro}[theo]{Corollary}
\newtheorem{lemm}[theo]{Lemma}
\newtheorem{prop}[theo]{Proposition}

\newtheorem{rema}[theo]{Remark}

\allowdisplaybreaks

\begin{document}

\title[On Whittaker  modules]{Whittaker  modules for the planar Galilean conformal algebra and its central extension}

\author{Qiufan Chen, Yufeng Yao and Hengyun Yang}

\address{Department of Mathematics, Shanghai Maritime University,
 Shanghai, 201306, China.}\email{chenqf@shmtu.edu.cn}
\address{Department of Mathematics, Shanghai Maritime University,
 Shanghai, 201306, China.}\email{yfyao@shmtu.edu.cn}
\address{Department of Mathematics, Shanghai Maritime University,
 Shanghai, 201306, China.}\email{hyyang@shmtu.edu.cn}
\subjclass[2010]{17B10, 17B35, 17B65, 17B68}

\keywords{planar Galilean conformal algebra and its central extension, Whittaker  vector, Whittaker  module, (non-)singular type, irreducible module}

\thanks{This work is supported by National Natural Science Foundation of China (Grant Nos. 11801363, 11771279, 11671138 and 11671247).}

\begin{abstract}
Let $\mathcal{G}$ be the planar Galilean conformal algebra and $\widetilde{\mathcal{G}}$ be its universal central extension. Then $\mathcal{G}$ (resp. $\widetilde{\mathcal{G}}$) admits a triangular decomposition: $\mathcal{G}=\mathcal{G}^{+}\oplus\mathcal{G}^{0}\oplus\mathcal{G}^{-}$
(resp. $\widetilde{\mathcal{G}}=\widetilde{\mathcal{G}}^{+}\oplus\widetilde{\mathcal{G}}^{0}\oplus\widetilde{\mathcal{G}}^{-}$). In this paper,  we study universal and generic Whittaker $\mathcal{G}$-modules (resp. $\widetilde{\mathcal{G}}$-modules) of type $\phi$, where $\phi:\mathcal{G}^{+}=\widetilde{\mathcal{G}}^{+}\longrightarrow\C$ is a Lie algebra homomorphism. We classify the isomorphism classes of universal and generic Whittaker modules. Moreover, we show that a generic Whittaker modules of type $\phi$ is irreducible if and only if $\phi$ is nonsingular. For the nonsingular case, we completely determine the Whittaker vectors in universal and generic Whittaker modules. For the singular case, we concretely construct some proper submodules of generic Whittaker modules.
\end{abstract}

\maketitle

\section{Introduction}

The non-relativistic limit of the AdS/CFT conjecture \cite{Mal} has received a lot of attention. The main motivation is to study real life systems in condensed matter physics via the gauge-gravity duality. The study of a different non-relativistic limit was initiated
in \cite{BG}, where the authors proposed the Galilean conformal algebras as a different non-relativistic limit of the AdS/CFT conjecture, and studied a non-relativistic conformal symmetry obtained by a parametric contraction of the relativistic conformal group. The finite-dimensional  Galilean conformal algebra is associated with certain non-semisimple Lie algebra which is regarded as a nonrelativistic analogue of conformal algebras. It was found that the finite  Galilean conformal algebra could be given an infinite-dimensional lift for all space-time dimensions (cf. ~\cite{BG, BG2, HR, MT}). These infinite-dimensional extensions contain a subalgebra isomorphic to the (centerless) Virasoro algebra, which would suggest that they are important in physics. The planar  Galilean conformal algebra $\mathcal{G}$, which was first introduced by Bagchi  and Gopakumar  in \cite{BG} and named by Aizawa in \cite{A}, is a Lie algebra with a basis $\{\mathbb{L}_m, \mathbb{H}_m, \mathbb{I}_m, \mathbb{J}_m\mid m\in\Z\}$ and the nontrivial Lie brackets defined by
\begin{equation*}
\aligned
&[\mathbb{L}_n,\mathbb{L}_m]=(m-n)\mathbb{L}_{m+n},\quad [\mathbb{L}_n,\mathbb{H}_m]=m\mathbb{H}_{m+n}\\
&[\mathbb{L}_n,\mathbb{I}_m]=(m-n)\mathbb{I}_{m+n},\quad [\mathbb{L}_n,\mathbb{J}_m]=(m-n)\mathbb{J}_{m+n},\\
&[\mathbb{H}_n,\mathbb{I}_m]=\mathbb{J}_{m+n},\quad [\mathbb{H}_n,\mathbb{J}_m]=-\mathbb{I}_{m+n},\quad \forall m,n\in\Z.
\endaligned
\end{equation*}
Set $$L_m:=\mathbb{L}_m,\, H_m:=\sqrt{-1}\mathbb{H}_m,\, I_m:=\mathbb{I}_m+\sqrt{-1}\mathbb{J}_m\,\text{ and }\,J_m:=\mathbb{I}_m-\sqrt{-1}\mathbb{J}_m\,\text{ for }\, m\in\Z.$$
Then $\{L_m, H_m, I_m, J_m\mid m\in\Z\}$ is another basis of $\mathcal{G}$  with the following nontrivial Lie brackets
\begin{equation*}
\aligned
&[L_n,L_m]=(m-n)L_{m+n},\quad [L_n,H_m]=mH_{m+n}\\
&[L_n,I_m]=(m-n)I_{m+n},\quad [L_n,J_m]=(m-n)J_{m+n},\\
&[H_n,I_m]=I_{m+n},\quad [H_n,J_m]=-J_{m+n},\quad \forall m,n\in\Z.
\endaligned
\end{equation*}
The planar  Galilean conformal algebra $\mathcal{G}$ has a $\Z$-grading by the eigenvalues of the adjoint action of $L_0$. It follows that $\mathcal{G}$ possesses the following triangular decomposition:
$$\mathcal{G}=\mathcal{G}^{+}\oplus\mathcal{G}^{0}\oplus\mathcal{G}^{-},$$
where $$\mathcal{G}^{\pm}=\bigoplus_{m\in\N}\C L_{\pm m}\oplus\bigoplus_{m\in\N}\C H_{\pm m}\oplus\bigoplus_{m\in\N}\C I_{\pm m}\oplus\bigoplus_{m\in\N}\C J_{\pm m}$$
and $$\mathcal{G}^{0}=\C L_0\oplus\C H_0\oplus\C I_0\oplus\C J_0.$$
Set $\mathcal{B}^{-}=\mathcal{G}^{0}\oplus\mathcal{G}^{-}$. Biderivations, linear commuting maps and left-symmetric algebra structures  of  $\mathcal{G}$  were  studied in  \cite{CSY} and \cite{CS}, respectively. It is easy to see that $\mathcal{G}$ is perfect and the universal central extension   of $\mathcal{G}$, denoted by  $\widetilde{\mathcal{G}}$, was determined in \cite{GLP}.

Whittaker vectors and Whittaker modules play a critical role in the representation theory of finite-dimensional simple Lie algebras (cf.~ \cite{AP, K}). Whittaker modules have been intensively studied for many important infinite dimensional Lie algebras such as the Virasoro algebra
\cite{FJK, LGZ, OW, OW2}, the super-Virasoro
algebras \cite{LPX}, Heisenberg algebras \cite{C}, affine Kac-Moody algebras \cite{ALZ} and so on.  Analogous results in similar setting have been  worked out for many Lie algebras with triangular decompositions (cf. \cite{CSZ, LWZ, LZ, TWX, W, ZTL}). A general categorial framework for Whittaker modules was proposed in \cite{BM, MZ}. In this paper, we aim to study Whittaker modules for the planar Galilean conformal algebra $\mathcal{G}$ and its central extension $\widetilde{\mathcal{G}}$.

The paper is organized as follows. In Section 2, we recall some notations and collect known
facts about the planar Galilean conformal algebra $\mathcal{G}$ and its central extension $\widetilde{\mathcal{G}}$. Also, two special Whittaker modules, i.e., the universal and generic Whittaker modules are constructed. In Section 3, we precisely determine all the Whittaker vectors in the universal and generic Whittaker $\mathcal{G}$-modules (resp. $\widetilde{\mathcal{G}}$-modules) of nonsingular type. Section 4 is devoted to studying generic Whittaker modules. We provide a sufficient and necessary condition for a generic Whittaker module to be irreducible, and classify the isomorphism classes of irreducible generic Whittaker modules of nonsingular type. We also concretely construct some proper submodules of generic Whittaker modules of singular type.

Throughout the paper, we denote by $\C ,\,\Z,\,\N,\,\Z_+$ the sets of complex numbers, integers, positive integers and nonnegative integers, respectively. All vector spaces  are assumed to be  over $\C$. For a Lie algebra $\mathcal{L}$, we use $U(\mathcal{L})$ to denote the universal enveloping algebra of $\mathcal{L}$. More generally, for a subset $X$ of $\mathcal{L}$, we use $U(X)$ to denote the universal enveloping algebra of the subalgebra of $\mathcal{L}$ generated by $X$. For a finite set $S$, we let $\#\,S$ denote the number of elements in $S$.

\section{Preliminaries}
In this section, we introduce the notations and conventions that will be used throughout the paper.

Recall from \cite{GLP} that the universal central extension $\tilde{\mathcal{G}}$  of the planar  Galilean conformal algebra $\mathcal{G}$ is a Lie algebra with a basis $\{L_m, H_m, I_m, J_m, C_1, C_2, C_3\mid m\in\Z\}$ and the nontrivial Lie brackets given by
\begin{equation*}
\aligned
&[L_n,L_m]=(m-n)L_{m+n}+n^3\delta_{m+n,0}C_1,\\
&[L_n,H_m]=mH_{m+n}+n^2\delta_{m+n,0}C_2,\quad [H_n,H_m]=n\delta_{m+n,0}C_3,\\
&[L_n,I_m]=(m-n)I_{m+n},\quad [L_n,J_m]=(m-n)J_{m+n},\\
&[H_n,I_m]=I_{m+n},\quad [H_n,J_m]=-J_{m+n},\quad \forall m,n\in\Z.
\endaligned
\end{equation*}

By definition, it is easy to see the following facts.
\begin{itemize}\parskip-3pt
\item[(1)] Let $\C[ C_1, C_2, C_3]$ be the polynomial algebra generated by $ C_1, C_2, C_3$. Then the center of $U(\widetilde{\mathcal{G}})$ is $\C[ C_1, C_2, C_3]$.\vspace{2mm}
\item[(2)]$\widetilde{\mathcal{G}}$  is a semi-direct product of the Heisenberg-Virasoro algebra $$\mathcal{\widetilde{HV}}:=\bigoplus_{m\in\Z}\C L_m\oplus \bigoplus_{m\in\Z}\C H_m\oplus \C C_1\oplus \C C_2\oplus \C C_3$$ and the commutative ideal $$\mathcal{I}\oplus \mathcal{J}=\bigoplus_{m\in\Z}\C I_m\oplus \bigoplus_{m\in\Z}\C J_m,$$
    where $$\mathcal{I}:=\bigoplus_{m\in\Z}\C I_m\,\text{ and}\,\, \mathcal{J}:=\bigoplus_{m\in\Z}\C J_m.$$
\item[(3)] The Cartan subalgebra (modulo center) of $\widetilde{\mathcal{G}}$ is spanned by $L_0$ and $H_0$.
\item[(4)] $\widetilde{\mathcal{G}}$ has a $\Z$-grading by the eigenvalues of the adjoint action of $L_0$. It follows that $\widetilde{\mathcal{G}}$ possesses the following triangular decomposition:
$$\widetilde{\mathcal{G}}=\widetilde{\mathcal{G}}^{+}\oplus\widetilde{\mathcal{G}}^{0}\oplus\widetilde{\mathcal{G}}^{-},$$
where $$\widetilde{\mathcal{G}}^{\pm}=\bigoplus_{m\in\N}\C L_{\pm m}\oplus\bigoplus_{m\in\N}\C H_{\pm m}\oplus\bigoplus_{m\in\N}\C I_{\pm m}\oplus\bigoplus_{m\in\N}\C J_{\pm m}$$
and $$\widetilde{\mathcal{G}}^{0}=\C L_0\oplus\C H_0\oplus\C I_0\oplus\C J_0\oplus \C C_1\oplus \C C_2\oplus \C C_3.$$ As a Lie algebra,  $\widetilde{\mathcal{G}}^{+}$ (resp. $\widetilde{\mathcal{G}}^{-}$) is generated by $L_1,L_2,H_1,I_1$ and $J_1$ (resp. $L_{-1},L_{-2},H_{-1},I_{-1}$ and $J_{-1}$).
Set $$\widetilde{\mathcal{B}}^{-}=\widetilde{\mathcal{G}}^{0}\oplus\widetilde{\mathcal{G}}^{-}.$$
 \end{itemize}

Now we define a  partition $\mathbf{i}$ to be a nonincreasing sequence of non-negative integers $$\mathbf{i}:=(i_r\geq \ldots \geq i_2\geq i_1\geq0).$$
For $k\in\Z_+$, set $$i(k):=\#\,\{1\leq s\leq r\mid i_s=k\}.$$
Denote  by $P$ the set of all partitions. For $\mathbf{j}, \mathbf{i}, \mathbf{h}, \mathbf{l}\in P$, we define
\begin{eqnarray*}
&|\mathbf{i}|=i_1+i_2+\cdots+i_r, \\
&|(\mathbf{j},\mathbf{i},\mathbf{h},\mathbf{l})|=|\mathbf{j}|+|\mathbf{i}|+|\mathbf{h}|+|\mathbf{l}|,\\
& \Delta(\mathbf{j})=j(0)+j(1)+\cdots,\\
&L^{\mathbf{l}}=L_{-l_{r}}\cdots L_{-l_{2}}L_{-l_{1}}=\cdots L_{-2}^{l(2)}L_{-1}^{l(1)}L_{0}^{l(0)},\\
&H^{\mathbf{h}}=H_{-h_{s}}\cdots H_{-h_{2}}H_{-h_{1}}=\cdots H_{-2}^{h(2)}H_{-1}^{h(1)}H_{0}^{h(0)},\\
&I^{\mathbf{i}}=I_{-i_{t}}\cdots I_{-i_{2}}I_{-i_{1}}=\cdots I_{-2}^{i(2)}I_{-1}^{i(1)}I_{0}^{i(0)},\\
&J^{\mathbf{j}}=J_{-j_{u}}\cdots J_{-j_{2}}J_{-j_{1}}=\cdots J_{-2}^{j(2)}J_{-1}^{j(1)}J_{0}^{j(0)}.
\end{eqnarray*}
Define $\mathbf{0}=(\cdots,2^{0},1^{0}, 0^{0})$, and write $L^{\mathbf{0}}=H^{\mathbf{0}}=I^{\mathbf{0}}=J^{\mathbf{0}}=1\in U(\widetilde{\mathcal{G}})$.

For any $(\mathbf{j},\mathbf{i}, \mathbf{h}, \mathbf{l})\in P^4$ and $f(C)\in\C[ C_1, C_2, C_3]$. It is obvious that
$$f(C)J^{\mathbf{j}}I^{\mathbf{i}} H^{\mathbf{h}}L^{\mathbf{l}}\in U(\widetilde{\mathcal{G}})_{-|(\mathbf{j},\mathbf{i},\mathbf{h},\mathbf{l})|},$$
where $U(\widetilde{\mathcal{G}})_{a}=\{x\in U(\widetilde{\mathcal{G}})\mid[L_0, x]=ax\}$ is the $a$-weight space of $U(\widetilde{\mathcal{G}})$ for $a\in\Z$.
\begin{defi}\label{D1}\rm
Let  $V$ be a $\mathcal{G}$-module (resp. $\widetilde{\mathcal{G}}$-module) and $\phi:\mathcal{G}^{+}=\widetilde{\mathcal{G}}^{+}\longrightarrow\C$ be any Lie algebra homomorphism. A vector $v\in V$ is called a Whittaker vector of type $\phi$ if $xv=\phi(x)v$ for all $x\in\mathcal{G}^{+}$ (resp. $x\in\widetilde{\mathcal{G}}^{+}$). A $\mathcal{G}$-module (resp. $\widetilde{\mathcal{G}}$-module) $V$ is called a Whittaker module of type $\phi$ if $V$ contains a nonzero cyclic Whittaker vector $v$ of type $\phi$.
\end{defi}

In the present setting, the elements $L_1, L_2, H_1, I_1, J_1$ generate $\mathcal{G}^{+}=\widetilde{\mathcal{G}}^{+}$.  Then for any Lie algebra homomorphism
$\phi:\mathcal{G}^{+}=\widetilde{\mathcal{G}}^{+}\longrightarrow\C$, we have
$$\phi(L_m)=\phi(H_n)=\phi(I_n)=\phi(J_n)=0  \quad \mathrm{for} \ m\geq3, n\geq2.$$
\begin{defi}\label{D11}\rm The Lie algebra homomorphism $\phi:\mathcal{G}^{+}=\widetilde{\mathcal{G}}^{+}\longrightarrow\C$ is called nonsingular if $\phi(I_1)\phi(J_1)\neq 0 $. Otherwise $\phi$ is called singular.
\end{defi}
\begin{defi}\label{D2}\rm
Let $\phi:\mathcal{G}^{+}=\widetilde{\mathcal{G}}^{+}\longrightarrow\C$ be a Lie algebra homomorphism. Define a one-dimensional $\mathcal{G}^{+}=\widetilde{\mathcal{G}}^{+}$-module $\C_\phi:=\C w_\phi$ by $xw_\phi=\phi(x)w_\phi$ for $x\in\mathcal{G}^{+}=\widetilde{\mathcal{G}}^{+}$.
\begin{itemize}
\item[(1)] The induced module \begin{equation*}M(\phi)=U(\mathcal{G})\otimes_{U(\mathcal{G}^{+})}\C_\phi\end{equation*}
is called the universal Whittaker $\mathcal{G}$-module of type $\phi$ with a cyclic Whittaker vector $1\otimes w_\phi$.
\item[(2)] The induced module \begin{equation*}\widetilde{M}(\phi)=U(\widetilde{\mathcal{G}})\otimes_{U(\widetilde{\mathcal{G}}^{+})}\C_\phi\end{equation*}
is called the universal Whittaker $\widetilde{\mathcal{G}}$-module of type $\phi$ with a cyclic Whittaker vector $1\otimes w_\phi$.
\end{itemize}
\end{defi}

\begin{rema}\label{rem for univ}
For convenience, we denote $w_\phi:=1\otimes w_\phi$ for brevity. Obviously, $M(\phi)$ (resp. $\widetilde{M}(\phi)$) has the universal property in the sense that for any Whittaker $\mathcal{G}$-module (resp. $\widetilde{\mathcal{G}}$-module) $V$ of type $\phi$ generated by a cyclic Whittaker vector $v$, there is a surjective homomorphism $\pi:M(\phi)\longrightarrow V$ (resp. $\pi:\widetilde{M}(\phi)\longrightarrow V$) such that $\pi(w_\phi)=v$.
\end{rema}

According to the $\mathrm{PBW}$ Theorem, $U(\mathcal{B}^{-})$ (resp. $U(\widetilde{\mathcal{B}}^{-})$) has a basis $\{J^{\mathbf{j}}I^{\mathbf{i}} H^{\mathbf{h}}L^{\mathbf{l}}\mid (\mathbf{j},\mathbf{i},\mathbf{h},\mathbf{l})\in P^{4}\}$ (resp. $\{C^{\mathbf{\alpha}}J^{\mathbf{j}}I^{\mathbf{i}} H^{\mathbf{h}}L^{\mathbf{l}}\mid (\mathbf{j},\mathbf{i},\mathbf{h},\mathbf{l})\in P^{4}, \alpha\in \Z_{+}^{3}\}$), where $C^{\alpha}=C_{1}^{\alpha_1}C_{2}^{\alpha_2}C_{3}^{\alpha_3}, \alpha=(\alpha_1, \alpha_2, \alpha_3)\in\Z_{+}^{3}$. Thus, $M(\phi)$ (resp. $\widetilde{M}(\phi)$)  has a basis
\begin{equation}\label{def2.11}
\{J^{\mathbf{j}}I^{\mathbf{i}} H^{\mathbf{h}}L^{\mathbf{l}}w_\phi\mid (\mathbf{j},\mathbf{i},\mathbf{h},\mathbf{l})\in P^{4}\} \,(\text{resp.}\, \{C^{\mathbf{\alpha}}J^{\mathbf{j}}I^{\mathbf{i}} H^{\mathbf{h}}L^{\mathbf{l}}w_\phi\mid (\mathbf{j},\mathbf{i},\mathbf{h},\mathbf{l})\in P^{4}, \mathbf{\alpha}\in \Z_{+}^{3}\})
\end{equation}
and $uw_\phi\neq0$ whenever $0\neq u\in U(\mathcal{B}^{-})$ (resp. $U(\widetilde{\mathcal{B}}^{-})$).

For any nonzero element
$$ v=\sum_{(\mathbf{j},\mathbf{i},\mathbf{h},\mathbf{l})\in P^{4}}f_{\mathbf{j},\mathbf{i},\mathbf{h},\mathbf{l}}J^{\mathbf{j}}I^{\mathbf{i}} H^{\mathbf{h}}L^{\mathbf{l}}w_\phi\in M(\phi)\,(\text{resp.}\, v=\sum_{(\mathbf{j},\mathbf{i},\mathbf{h},\mathbf{l})\in P^{4}}f_{\mathbf{j},\mathbf{i},\mathbf{h},\mathbf{l}}(C)J^{\mathbf{j}}I^{\mathbf{i}} H^{\mathbf{h}}L^{\mathbf{l}}w_\phi\in \widetilde{M}(\phi))$$ with
$f_{\mathbf{j},\mathbf{i},\mathbf{h},\mathbf{l}}\in\C, f_{\mathbf{j},\mathbf{i},\mathbf{h},\mathbf{l}}(C)\in\C[C_1, C_2, C_3]$, we define
$${\rm max\,}{\rm deg\,}(v):={\rm max\,}\{
|(\mathbf{j},\mathbf{i},\mathbf{h},\mathbf{l})|\,\big|\,  f_{\mathbf{j},\mathbf{i},\mathbf{h},\mathbf{l}}\neq0\,(\text{resp.}\, f_{\mathbf{j},\mathbf{i},\mathbf{h},\mathbf{l}}(C)\neq0)\}, $$
$${\rm max\,}_{L_{-s}, H_{-s}}(v):={\rm max\,}\{l(s)+h(s)\,\big|\,  f_{\mathbf{j},\mathbf{i},\mathbf{h},\mathbf{l}}\neq0\,(\text{resp.}\,  f_{\mathbf{j},\mathbf{i},\mathbf{h},\mathbf{l}}(C)\neq0)\}, \quad\forall s\in\Z_{+}.$$
We set ${\rm max\,}{\rm deg\,}(0)=-\infty$.

\begin{defi}
Let $\phi:\mathcal{G}^{+}=\widetilde{\mathcal{G}}^{+}\longrightarrow\C$ be a Lie algebra homomorphism and $\xi=(\xi_1, \xi_2, \xi_3)\in\C^3$. Set
\begin{equation}\label{uniw}\widetilde{L}_{\phi, \xi}=\widetilde{M}(\phi)\bigg/\bigg(\sum_{s=1}^3(C_s-\xi_s1)\widetilde{M}(\phi)\bigg),\end{equation}
and $L_{\phi}=M(\phi)$. We call $L_{\phi}$ (resp. $\widetilde{L}_{\phi, \xi}$) a generic  Whittaker $\mathcal{G}$-module (resp. $\widetilde{\mathcal{G}}$-module) of type $\phi$.
\end{defi}

\begin{rema}\label{rem} For any $x\in U(\mathcal{G}^{+})$ (resp. $U(\widetilde{\mathcal{G}}^{+})$), and $w^{\prime}=uw_\phi$ for  $u\in U(\mathcal{B}^{-})$ (resp. $U(\widetilde{\mathcal{B}}^{-})$), we have
\begin{equation*}\big(x-\phi(x)\big)w^{\prime}=[x,u]w_\phi,\,\forall\,x\in \mathcal{G}^{+}=\widetilde{\mathcal{G}}^{+}.\end{equation*}
\end{rema}

\section{Whittaker vectors for Whittaker modules of nonsingular type}

In this section we always assume that  $\phi:\mathcal{G}^{+}=\widetilde{\mathcal{G}}^{+}\longrightarrow\C$ is a Lie algebra homomorphism. The aim of this section is to precisely determine all the Whittaker vectors in the universal and generic Whittaker $\mathcal{G}$-modules (resp. $\widetilde{\mathcal{G}}$-modules) of type $\phi$ when $\phi$ is nonsingular. For this purpose, we first give a series of lemmas which will be used to prove our main results.
\begin{lemm}\label{2llfffd}
The following statements hold.
\begin{itemize}
\item[(i)] For $n\in\N$, we have
\begin{equation*}
{\rm max\,}{\rm deg\,}([F_n, J^{\mathbf{j}}I^{\mathbf{i}} H^{\mathbf{h}}L^{\mathbf{l}}]w_\phi)\leq |(\mathbf{j},\mathbf{i},\mathbf{h},\mathbf{l})|-n+1, \quad \mathrm{where}\ F_n=I_n \  \mathrm{or} \ J_n.
\end{equation*}
\item[(ii)] For any $k\in\Z_{+}$ and $a\in\N$, we have
\begin{eqnarray*}
&[I_{k+1}, L_{-k}^{a}]=v_1-a(2k+1)L_{-k}^{a-1}I_1, \\
&[I_{k+1}, H_{-k}^{a}]=v_2-aH_{-k}^{a-1}I_1, \\
&[J_{k+1}, L_{-k}^{a}]=v_3-a(2k+1)L_{-k}^{a-1}J_1,\\
&[J_{k+1}, H_{-k}^{a}]=v_4+aH_{-k}^{a-1}J_1,
\end{eqnarray*}
where ${\rm max\,}{\rm deg\,}(v_{s}w_\phi)<(a-1)k$ if $k>0$, and ${\rm max\,}_{L_0, H_0}(v_{s}w_\phi)<a-1 $ if $k=0$ for $1\leq s\leq 4$.
\item[(iii)] Suppose $\mathbf{l}:=(\cdots,2^{l(2)},1^{l(1)}, 0^{l(0)}), \mathbf{h}:=(\cdots,2^{h(2)},1^{h(1)}, 0^{h(0)})$ and $k\in\Z_+$ is the minimal  non-negative integer such that $l(k)\neq0$ or $h(k)\neq0$, then
\begin{eqnarray}
\label{equ11}\!\!\!\!\!\!
[I_{k+1},J^{\mathbf{j}}I^{\mathbf{i}} H^{\mathbf{h}}L^{\mathbf{l}}]w_\phi
&\!\!\!=\!\!\!&w_1-(2k+1)l(k)\phi(I_1)J^{\mathbf{j}}I^{\mathbf{i}} H^{\mathbf{h}}L^{\widetilde{\mathbf{l}}}w_\phi\nonumber\\
&\!\!\!\!\!\!\!\!\!\!\!\!\!\!\!\!\!\!\!\!\!\!\!\!&
-h(k)\phi(I_1) J^{\mathbf{j}}I^{\mathbf{i}} H^{\widetilde{\mathbf{h}}}L^{\mathbf{l}}w_\phi,\\
\label{equ12}\!\!\!\!\!\!
[J_{k+1},J^{\mathbf{j}}I^{\mathbf{i}} H^{\mathbf{h}}L^{\mathbf{l}}]w_\phi
&\!\!\!=\!\!\!&
w_2-(2k+1)l(k)\phi(J_1)J^{\mathbf{j}}I^{\mathbf{i}} H^{\mathbf{h}}L^{\widetilde{\mathbf{l}}}w_\phi\nonumber\\
&\!\!\!\!\!\!\!\!\!\!\!\!\!\!\!\!\!\!\!\!\!\!\!\!&
+h(k)\phi(J_1) J^{\mathbf{j}}I^{\mathbf{i}} H^{\widetilde{\mathbf{h}}}L^{\mathbf{l}}w_\phi,
\end{eqnarray}
where if $k>0$, then ${\rm max\,}{\rm deg\,}(w_s)<|(\mathbf{j},\mathbf{i},\mathbf{h},\mathbf{l})|-k$; if $k=0$, then $w_s=w_{s}'+w_{s}''$ with ${\rm max\,}{\rm deg\,}(w_{s}')<|(\mathbf{j},\mathbf{i},\mathbf{h},\mathbf{l})|, {\rm max\,}_{L_0, H_0}(w_{s}'')<l(0)+h(0)-1$ for  $s=1, 2$. While $\widetilde{\mathbf{l}}$ and $\widetilde{\mathbf{h}}$ satisfy $\widetilde{l}(m)=l(m), \widetilde{h}(m)=h(m)$ if $m\neq k$, and $\widetilde{l}(k)=l(k)-1, \widetilde{h}(k)=h(k)-1$.
\end{itemize}
\end{lemm}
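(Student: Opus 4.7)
The three parts form a nested triple: (i) is a foundational max-degree bound for $[F_n, X]w_\phi$ with $X$ a PBW monomial and $F\in\{I,J\}$; (ii) extracts the explicit leading coefficient when $X$ is a pure power $L_{-k}^a$ or $H_{-k}^a$; and (iii) assembles the general formula for $[F_{k+1}, J^{\mathbf{j}}I^{\mathbf{i}} H^{\mathbf{h}}L^{\mathbf{l}}]w_\phi$ out of (ii), with the remainder bounded via (i). The plan is to prove them in that order. For (i), I would induct on $D:=|(\mathbf{j},\mathbf{i},\mathbf{h},\mathbf{l})|$. The base $D=0$ uses $[F_n, L_0]=nF_n$, $[F_n, H_0]=\mp F_n$, together with $F_n w_\phi=\phi(F_n)w_\phi$ (which vanishes for $n\ge 2$), to reduce $[F_n, X]w_\phi$ to a polynomial in $L_0, H_0$ applied to $w_\phi$, of max-degree $0\le D-n+1$. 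For the induction step, write $X=Z_{-s_1}Y$ with $Z_{-s_1}$ the leftmost factor and expand by Leibniz: the tail $Z_{-s_1}[F_n, Y]w_\phi$ is handled by the inductive hypothesis on $Y$ and picks up $s_1$ upon multiplication, giving the bound; for the head $[F_n, Z_{-s_1}]Yw_\phi$, observe that $[F_n, Z_{-s_1}]$ is either zero or a single $I$- or $J$-generator of weight $n-s_1$, so for $n-s_1\le 0$ the product lies in $U(\mathcal{B}^-)$ with max-deg $\le D-n$, while for $n-s_1>0$ one moves the $\mathcal{G}^+$-element past $Y$ and lets it act on $w_\phi$ via $\phi$, which annihilates it unless $n-s_1=1$ (precisely where the ``$+1$'' in the bound appears), and treats the residual commutator inductively.

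For (ii), expand $[F_{k+1}, Z_{-k}^a]=\sum_{j=0}^{a-1}Z_{-k}^{j}[F_{k+1}, Z_{-k}]Z_{-k}^{a-1-j}$ using the brackets $[I_{k+1},L_{-k}]=-(2k+1)I_1$, $[I_{k+1},H_{-k}]=-I_1$, $[J_{k+1},L_{-k}]=-(2k+1)J_1$, $[J_{k+1},H_{-k}]=J_1$. Commuting each $F_1$ to the right via $F_1 Z_{-k}^{a-1-j}=Z_{-k}^{a-1-j}F_1+[F_1, Z_{-k}^{a-1-j}]$ collects the stated principal term $\mp a(2k+1)Z_{-k}^{a-1}F_1$ (or $\pm a Z_{-k}^{a-1}F_1$ in the $Z=H$ case), leaving a residue $v_s=c\sum_j Z_{-k}^{j}[F_1, Z_{-k}^{a-1-j}]$ for an explicit scalar $c$. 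For $k\ge 1$, each $[F_1, Z_{-k}^{a-1-j}]$ lies in $U(\mathcal{B}^-)$, homogeneous of weight $1-(a-1-j)k$, and so has max-deg $\le(a-1-j)k-1$; summing with $Z_{-k}^{j}$ on the left yields ${\rm max\,deg}(v_s w_\phi)\le (a-1)k-1$. For $k=0$, use $F_1 L_0=(L_0-1)F_1$ and $F_1 H_0=(H_0\pm1)F_1$ to rewrite $[F_1, L_0^m]$ and $[F_1, H_0^m]$ as polynomials in $L_0, H_0$ of degree $m-1$ times $F_1$; summing gives ${\rm max}_{L_0, H_0}(v_s w_\phi)\le a-2$.

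For (iii), exploit $[I_{k+1}, I_m]=[I_{k+1}, J_m]=[J_{k+1}, I_m]=[J_{k+1}, J_m]=0$ to reduce to
\[
[F_{k+1}, J^{\mathbf{j}}I^{\mathbf{i}} H^{\mathbf{h}}L^{\mathbf{l}}]=J^{\mathbf{j}}I^{\mathbf{i}}\bigl([F_{k+1}, H^{\mathbf{h}}]L^{\mathbf{l}}+H^{\mathbf{h}}[F_{k+1}, L^{\mathbf{l}}]\bigr);
\]
by the minimality of $k$, factor $H^{\mathbf{h}}=H^{(>k)}H_{-k}^{h(k)}$ and $L^{\mathbf{l}}=L^{(>k)}L_{-k}^{l(k)}$. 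Apply Leibniz once more to split off $H_{-k}^{h(k)}$ and $L_{-k}^{l(k)}$ and insert (ii); the principal $F_1$-terms, moved through $L^{\mathbf{l}}$ via $F_1 L^{\mathbf{l}}=L^{\mathbf{l}}F_1+[F_1, L^{\mathbf{l}}]$ and absorbed by $F_1 w_\phi=\phi(F_1)w_\phi$, reproduce the two leading contributions displayed in \eqref{equ11}--\eqref{equ12}. The remainder $w_s$ collects three kinds of contributions: (a) the $v_s$ from (ii) sandwiched between $J^{\mathbf{j}}I^{\mathbf{i}}$ and the remaining factors, which stay in $U(\mathcal{B}^-)$ of weight strictly below the principal part; (b) the commutators $[F_{k+1}, H^{(>k)}]$ and $[F_{k+1}, L^{(>k)}]$, which again lie in $U(\mathcal{B}^-)$ with max-deg controlled by (i); and (c) the correction $[F_1, L^{\mathbf{l}}]w_\phi$, which for $k\ge 1$ satisfies ${\rm max\,deg}\le|\mathbf{l}|-1$, because the absence of $L_0$-factors in $L^{\mathbf{l}}$ and $H_0$-factors in $H^{\mathbf{h}}$ excludes the ``$+1$''-case of (i). A weight count shows each contribution is bounded by $D-k-1<D-k$. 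The $k=0$ case runs in parallel with the ${\rm max}_{L_0, H_0}$-filtration replacing the main grading. The main obstacle I anticipate is precisely this $k=0$ situation, where $L_0, H_0\in\mathcal{B}^-$ contribute $0$ to the PBW index, so the main max-deg filtration is insensitive to $L_0, H_0$-powers and must be refined by the auxiliary ${\rm max}_{L_0, H_0}$ grading; coordinating these two filtrations consistently across (i)--(ii)--(iii) is where most of the technical bookkeeping lives.
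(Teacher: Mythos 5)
Your plan tracks the paper's own proof closely: Leibniz expansion, explicit commutator formulas for pure powers, then assembling (iii) from (ii) with the remainder controlled by (i), and you correctly flag the $k=0$ situation as the delicate spot that forces the auxiliary ${\rm max}_{L_0,H_0}$ filtration. The computational content of (ii) and (iii) is the same as in the paper (the paper records closed binomial-sum formulas for $[F_{k+1},Z_{-k}^a]$, but they come from exactly the telescoping expansion you describe), and your observation that the ``$+1$'' in (i) is avoided for $[F_1,L^{\mathbf{l}}]$ when $k\ge1$ is correct and is implicitly used in the paper's degree count.

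There is one genuine (though easily repaired) gap in (i). You induct on $D=|(\mathbf{j},\mathbf{i},\mathbf{h},\mathbf{l})|$ and peel off the leftmost PBW factor $Z_{-s_1}$, but when $s_1=0$ (e.g.\ the leftmost factor is $J_0$, which is allowed whenever $\mathbf{j}$ consists only of zeros) the tail $Z_0[F_n,Y]w_\phi$ has $|Y|=D$, so the inductive hypothesis does not strictly decrease. The same problem hides in your ``base case $D=0$'': this case already covers arbitrarily long monomials in $L_0,H_0,I_0,J_0$, so it is not a genuine base but requires its own sub-induction. The standard fix is to induct on the total number of PBW factors (or on $(D,\text{length})$ lexicographically), which is what makes both the head and tail recursions terminate; the paper sidesteps the issue entirely by performing the full Leibniz expansion in one pass and then reading off the weights of the resulting PBW monomials $J^{\mathbf{j}_2}I^{\mathbf{i}_2}H^{\mathbf{h}_2}L^{\mathbf{l}_2}I_{n_2}$, $J^{\mathbf{j}_3}I^{\mathbf{i}_3}H^{\mathbf{h}_3}L^{\mathbf{l}_3}J_{n_3}$ with $0<n_2,n_3\le n$, using $I_{n_2}w_\phi=J_{n_3}w_\phi=0$ for $n_2,n_3>1$ to obtain the bound $D-n+1$ directly. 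Also a small imprecision: you write that the base case gives ``max-degree $0\le D-n+1$''; for $n\ge2$ and $D=0$ the bound $D-n+1$ is negative, and the statement only holds because the element is actually zero, which is worth saying explicitly. With those two points tightened, the argument is sound.
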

\begin{proof}
(i) Let $\mathbf{l}:=(l_r, \cdots, l_{2}, l_{1}), \mathbf{h}:=(h_s, \cdots, h_{2}, h_{1})$. Then
\begin{eqnarray}\label{A11112}
[F_n, J^{\mathbf{j}}I^{\mathbf{i}} H^{\mathbf{h}}L^{\mathbf{l}}]
&=&\sum_{p=1}^rJ^{\mathbf{j}}I^{\mathbf{i}} H^{\mathbf{h}} L_{-l_{r}}\cdots [F_n, L_{-l_{p}}]\cdots  L_{-l_{1}}
\nonumber\\
&&
+\sum_{q=1}^sJ^{\mathbf{j}}I^{\mathbf{i}} H_{-h_{s}}\cdots [F_n, H_{-h_{q}}]\cdots H_{-h_{1}}L^{\mathbf{l}}\nonumber\\
&=&
\sum a_{\mathbf{j_{1}}, \mathbf{i_{1}}, \mathbf{h_{1}}, \mathbf{l_{1}}}J^{\mathbf{j_{1}}}I^{\mathbf{i_{1}}} H^{\mathbf{h_{1}}}L^{\mathbf{l_{1}}}+\sum b_{\mathbf{j_{2}}, \mathbf{i_{2}}, \mathbf{h_{2}}, \mathbf{l_{2}}, n_2}J^{\mathbf{j_{2}}}I^{\mathbf{i_{2}}} H^{\mathbf{h_{2}}}L^{\mathbf{l_{2}}}I_{n_2}\nonumber\\
&&
+\sum c_{\mathbf{j_{3}}, \mathbf{i_{3}}, \mathbf{h_{3}}, \mathbf{l_{3}}, n_3}J^{\mathbf{j_{3}}}I^{\mathbf{i_{3}}} H^{\mathbf{h_{3}}}L^{\mathbf{l_{3}}}J_{n_3},
\end{eqnarray}
where $a_{\mathbf{j_{1}}, \mathbf{i_{1}}, \mathbf{h_{1}}, \mathbf{l_{1}}}, b_{\mathbf{j_{2}}, \mathbf{i_{2}}, \mathbf{h_{2}}, \mathbf{l_{2}}, n_2}, c_{\mathbf{j_{3}}, \mathbf{i_{3}}, \mathbf{h_{3}}, \mathbf{l_{3}}, n_3}\in\C$, $0<n_2, n_3\leq n$ and
\begin{eqnarray*}
\!\!\!\!\!\!
|(\mathbf{j},\mathbf{i},\mathbf{h},\mathbf{l})|-n
&\!\!\!=\!\!\!&|(\mathbf{j_{1}},\mathbf{i_{1}},\mathbf{h_{1}},\mathbf{l_{1}})|\nonumber\\
&\!\!\!=\!\!\!&
|(\mathbf{j_{2}},\mathbf{i_{2}},\mathbf{h_{2}},\mathbf{l_{2}})|-n_2
\nonumber\\
&\!\!\!=\!\!\!&
|(\mathbf{j_{3}},\mathbf{i_{3}},\mathbf{h_{3}},\mathbf{l_{3}})|-n_3.
\end{eqnarray*}
Since $I_{n_2}w_\phi=J_{n_3}w_\phi=0$ for $n_2, n_3>1$, we have
$${\rm max\,}{\rm deg\,}([F_n, J^{\mathbf{j}}I^{\mathbf{i}} H^{\mathbf{h}}L^{\mathbf{l}}])w_\phi)\leq |(\mathbf{j},\mathbf{i},\mathbf{h},\mathbf{l})|-n+1.$$

(ii) follows from the following formulae in $U(\widetilde{\mathcal{G}})$, which can be proved by induction on $a\in\N$.
\begin{eqnarray*}\label{asfs}
&[I_{k+1}, L_{-k}^{a}]=\sum_{s=1}^{a}\binom{a}{s}
 \prod _{t=0}^{s-1}(tk-2k-1)L_{-k}^{a-s}I_{1+k-sk}, \nonumber\\
&[J_{k+1}, L_{-k}^{a}]=\sum_{s=1}^{a}\binom{a}{s}
 \prod _{t=0}^{s-1}(tk-2k-1)L_{-k}^{a-s}J_{1+k-sk},\nonumber\\
&[I_{k+1}, H_{-k}^{a}]=\sum_{s=1}^{a}(-1)^{s}\binom{a}{s}
H_{-k}^{a-s}I_{1+k-sk}, \nonumber\\
&[J_{k+1}, H_{-k}^{a}]=\sum_{s=1}^{a}\binom{a}{s}
H_{-k}^{a-s}J_{1+k-sk}.
\end{eqnarray*}

(iii) We only prove \eqref{equ11},  since \eqref{equ12} can be managed by the same way. Denote
\begin{eqnarray*}
L^{\mathbf{l}}=L^{\mathbf{l'}}L_{-k}^{l(k)}\quad {\rm and }\quad H^{\mathbf{h}}=H^{\mathbf{h'}}H_{-k}^{h(k)},
\end{eqnarray*}
where $l'(s)=l(s), h'(s)=h(s)$ for all $s\neq k$ and $l'(k)=h'(k)=0$. Then
\begin{eqnarray*}\label{A03131}
[I_{k+1}, J^{\mathbf{j}}I^{\mathbf{i}} H^{\mathbf{h}}L^{\mathbf{l}}]w_{\phi}
&=&J^{\mathbf{j}}I^{\mathbf{i}}H^{\mathbf{h}}[I_{k+1}, L^{\mathbf{l'}}L_{-k}^{l(k)}]w_{\phi}+J^{\mathbf{j}}I^{\mathbf{i}}[I_{k+1}, H^{\mathbf{h'}}H_{-k}^{h(k)}]L^{\mathbf{l}}w_{\phi}
\nonumber\\
&=&J^{\mathbf{j}}I^{\mathbf{i}}H^{\mathbf{h}}[I_{k+1}, L^{\mathbf{l'}}]L_{-k}^{l(k)}w_{\phi}+J^{\mathbf{j}}I^{\mathbf{i}}H^{\mathbf{h}}L^{\mathbf{l'}}[I_{k+1}, L_{-k}^{l(k)}]w_{\phi}\nonumber\\
&&
+J^{\mathbf{j}}I^{\mathbf{i}}[I_{k+1}, H^{\mathbf{h'}}]H_{-k}^{h(k)}L^{\mathbf{l}}w_{\phi}+J^{\mathbf{j}}I^{\mathbf{i}}H^{\mathbf{h'}}[I_{k+1}, H_{-k}^{h(k)}]L^{\mathbf{l}}w_{\phi}.
\end{eqnarray*}
Considering the first and  third terms in the right hand side of the above equality, we have $[I_{k+1}, L^{\mathbf{l'}}], [I_{k+1}, H^{\mathbf{h'}}]\in U(\mathcal{B}^{-})$ and
\begin{equation*}
{\rm max\,}{\rm deg\,}(J^{\mathbf{j}}I^{\mathbf{i}}H^{\mathbf{h}}[I_{k+1}, L^{\mathbf{l'}}]L_{-k}^{l(k)}w_{\phi}), {\rm max\,}{\rm deg\,}(J^{\mathbf{j}}I^{\mathbf{i}}[I_{k+1}, H^{\mathbf{h'}}]H_{-k}^{h(k)}w_{\phi}) <|(\mathbf{j},\mathbf{i},\mathbf{h},\mathbf{l})|-k.
\end{equation*}
By applying (ii) to the second  and last terms, we see that
\begin{eqnarray*}\label{A0313}
\!\!\!\!\!\!
J^{\mathbf{j}}I^{\mathbf{i}}H^{\mathbf{h}}L^{\mathbf{l'}}[I_{k+1}, L_{-k}^{l(k)}]w_{\phi}
&\!\!\!=\!\!\!&J^{\mathbf{j}}I^{\mathbf{i}}H^{\mathbf{h}}L^{\mathbf{l'}}v_1w_{\phi}-l(k)(2k+1)\phi(I_1)J^{\mathbf{j}}I^{\mathbf{i}}H^{\mathbf{h}}L^{\mathbf{l'}}L_{-k}^{l(k)-1}w_{\phi},\\
\!\!\!\!\!\!
J^{\mathbf{j}}I^{\mathbf{i}}H^{\mathbf{h'}}[I_{k+1}, H_{-k}^{h(k)}]L^{\mathbf{l}}w_{\phi}
&\!\!\!=\!\!\!&J^{\mathbf{j}}I^{\mathbf{i}}H^{\mathbf{h'}}\big(v_2-h(k)H_{-k}^{h(k)-1}J_1\big)L^{\mathbf{l}}w_{\phi}\nonumber\\
&\!\!\!=\!\!\!&J^{\mathbf{j}}I^{\mathbf{i}}H^{\mathbf{h'}}v_2L^{\mathbf{l}}w_{\phi}-h(k)J^{\mathbf{j}}I^{\mathbf{i}}H^{\mathbf{h'}}H_{-k}^{h(k)-1}[J_1,L^{\mathbf{l}}] w_{\phi}\nonumber\\
&\!\!\!\!\!\!\!\!\!\!\!\!\!\!\!\!\!\!\!\!\!\!\!\!&
-h(k)\phi(J_1)J^{\mathbf{j}}I^{\mathbf{i}}H^{\mathbf{h'}}H_{-k}^{h(k)-1}L^{\mathbf{l}}w_{\phi}.
\end{eqnarray*}
If $k>0$, then\begin{eqnarray*}
&{\rm max\,}{\rm deg\,}(J^{\mathbf{j}}I^{\mathbf{i}}H^{\mathbf{h}}L^{\mathbf{l'}}v_1w_{\phi})<|(\mathbf{j},\mathbf{i},\mathbf{h},\mathbf{l'})|+\big(l(k)-1\big)k=|(\mathbf{j},\mathbf{i},\mathbf{h},\mathbf{l})|-k,\\
&{\rm max\,}{\rm deg\,}(J^{\mathbf{j}}I^{\mathbf{i}}H^{\mathbf{h'}}v_2L^{\mathbf{l}}w_{\phi})<|(\mathbf{j},\mathbf{i},\mathbf{h'},\mathbf{l})|+\big(h(k)-1\big)k=|(\mathbf{j},\mathbf{i},\mathbf{h},\mathbf{l})|-k,\\
&{\rm max\,}{\rm deg\,}(h(k)J^{\mathbf{j}}I^{\mathbf{i}}H^{\mathbf{h'}}H_{-k}^{h(k)-1}[J_1,L^{\mathbf{l}}]w_{\phi})<|(\mathbf{j},\mathbf{i},\mathbf{h},\mathbf{l})|-k.
\end{eqnarray*}
If $k=0$, one can observe that $${\rm max\,}_{L_0, H_0}(J^{\mathbf{j}}I^{\mathbf{i}}H^{\mathbf{h}}L^{\mathbf{l'}}v_1w_{\phi}) <l(0)+h(0)-1.$$
Furthermore, both $J^{\mathbf{j}}I^{\mathbf{i}}H^{\mathbf{h'}}v_2L^{\mathbf{l}}w_{\phi}$ and $h(0)J^{\mathbf{j}}I^{\mathbf{i}}H^{\mathbf{h'}}H_{0}^{h(0)-1}[J_1,L^{\mathbf{l}}]w_{\phi}$ can be divided into the sum of two parts $w'$ and $w''$ with ${\rm max\,}{\rm deg\,}(w^{'})<|(\mathbf{j},\mathbf{i},\mathbf{h},\mathbf{l})|$ and ${\rm max\,}_{L_0, H_0}(w^{''})<l(0)+h(0)-1$. Thus \eqref{equ11} holds, completing the proof.
\end{proof}

\begin{lemm}\label{2llfffd2}
The following statements hold.
\begin{itemize}
\item[(i)] For $n\in\N$, we have
\begin{equation*}
{\rm max\,}{\rm deg\,}([G_n, J^{\mathbf{j}}I^{\mathbf{i}} H^{\mathbf{h}}]w_\phi)\leq |(\mathbf{j},\mathbf{i},\mathbf{h})|-n+1, \quad \mathrm{where}\ G_n=L_n \  \mathrm{or} \ H_n.
\end{equation*}
\item[(ii)] For any $ k\in\Z_{+}$ and $a\in\N$, we have
\begin{eqnarray*}
&[L_{k+1}, I_{-k}^{a}]=-a(2k+1)I_{-k}^{a-1}I_1, \\
&[L_{k+1}, J_{-k}^{a}]=-a(2k+1)J_{-k}^{a-1}J_1,\\
&[H_{k+1}, I_{-k}^{a}]=aI_{-k}^{a-1}I_1, \\
&[H_{k+1}, J_{-k}^{a}]=-aJ_{-k}^{a-1}J_1.
\end{eqnarray*}
\item[(iii)] Let $\mathbf{i}:=(\cdots,2^{i(2)},1^{i(1)}, 0^{i(0)}), \mathbf{j}:=(\cdots,2^{j(2)},1^{j(1)}, 0^{j(0)}),
\mathbf{h}:=(\cdots,2^{h(2)},1^{h(1)}, 0^{h(0)})$, and $k\in\Z_+$ be the minimal  non-negative integer such that $i(k)\neq0$ or $j(k)\neq0$. Suppose $h(s)=0$ for any $0\leq s \leq k$. Then
\begin{eqnarray}
\label{equ13}
[L_{k+1},J^{\mathbf{j}}I^{\mathbf{i}} H^{\mathbf{h}}]w_\phi
&=&w_1-(2k+1)i(k)\phi(I_1)J^{\mathbf{j}}I^{\widetilde{\mathbf{i}}} H^{\mathbf{h}}w_\phi
\nonumber\\
&&
-(2k+1)j(k)\phi(J_1) J^{\widetilde{\mathbf{j}}}I^{\mathbf{i}} H^{\mathbf{h}}w_\phi,\\
\label{equ14}
[H_{k+1},J^{\mathbf{j}}I^{\mathbf{i}} H^{\mathbf{h}}]w_\phi
&=&
w_2+i(k)\phi(I_1)J^{\mathbf{j}}I^{\widetilde{\mathbf{i}}} H^{\mathbf{h}}w_\phi-j(k)\phi(J_1) J^{\widetilde{\mathbf{j}}}I^{\mathbf{i}} H^{\mathbf{h}}w_\phi,
\end{eqnarray}
where ${\rm max\,}{\rm deg\,}(w_s)<|(\mathbf{j},\mathbf{i},\mathbf{h})|-k$ for $s=1, 2$. While $\widetilde{\mathbf{i}}$ and $\widetilde{\mathbf{j}}$ satisfy that $\widetilde{i}(m)=i(m), \widetilde{j}(m)=j(m)$ if $m\neq k$ and $\widetilde{i}(k)=i(k)-1, \widetilde{j}(k)=j(k)-1$.
\end{itemize}
\end{lemm}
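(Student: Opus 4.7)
The proof will follow the same three-part structure as Lemma~\ref{2llfffd}, with one structural observation driving the whole argument: whenever $G\in\{L,H\}$ and $F\in\{I,J,H\}$, the bracket $[G_n,F_{-m}]$ produces only an $F$-type element (of the same type) or a central element, so commuting $L_n$ or $H_n$ with a product in $J,I,H$ never generates a new $L$-factor. For part~(i) I would expand $[G_n,J^{\mathbf{j}}I^{\mathbf{i}}H^{\mathbf{h}}]$ by Leibniz and restore PBW order, using that $I,J$ commute among themselves and that pushing an $I_s$ or $J_s$ past an $H_{-m}$ only produces lower-mode terms. The resulting expression splits into a part in $U(\widetilde{\mathcal{B}}^-)$ of degree $|(\mathbf{j},\mathbf{i},\mathbf{h})|-n$ plus terms of the form $T\cdot F'_s$ with $T\in U(\widetilde{\mathcal{B}}^-)$ and $F'_s$ a positive-mode element of type $\{I,J,H\}$. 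Since $\phi$ vanishes on such $F'_s$ whenever $s\geq 2$, only $s=1$ contributes after applying to $w_\phi$, yielding the bound $|(\mathbf{j},\mathbf{i},\mathbf{h})|-n+1$.

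Part~(ii) is a direct induction on $a$. The base case $a=1$ reads off the Lie brackets directly (for instance $[L_{k+1},I_{-k}]=-(2k+1)I_1$), and the inductive step applies $[G,X^a]=[G,X]X^{a-1}+X[G,X^{a-1}]$ together with the commutativities $[I_m,I_n]=[J_m,J_n]=0$, so that the freshly produced $I_1$ or $J_1$ slides past the remaining $I_{-k}$'s or $J_{-k}$'s at no cost.

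For part~(iii), I would write $J^{\mathbf{j}}=J^{\mathbf{j'}}J_{-k}^{j(k)}$ and $I^{\mathbf{i}}=I^{\mathbf{i'}}I_{-k}^{i(k)}$ so that $\mathbf{j'},\mathbf{i'}$ contain only modes strictly above $k$, and expand $[L_{k+1},J^{\mathbf{j}}I^{\mathbf{i}}H^{\mathbf{h}}]$ by Leibniz into five pieces. Three of these — namely $[L_{k+1},J^{\mathbf{j'}}]J_{-k}^{j(k)}I^{\mathbf{i}}H^{\mathbf{h}}$, $J^{\mathbf{j}}[L_{k+1},I^{\mathbf{i'}}]I_{-k}^{i(k)}H^{\mathbf{h}}$, and $J^{\mathbf{j}}I^{\mathbf{i}}[L_{k+1},H^{\mathbf{h}}]$ — stay entirely in $U(\widetilde{\mathcal{B}}^-)$ because every remaining mode exceeds $k$; this is precisely where the hypothesis $h(s)=0$ for $0\leq s\leq k$ enters. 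Each such piece has total degree $|(\mathbf{j},\mathbf{i},\mathbf{h})|-(k+1)$ and is absorbed into $w_1$. For the remaining two pieces I invoke (ii) to obtain the residuals $-(2k+1)j(k)J^{\widetilde{\mathbf{j}}}J_1I^{\mathbf{i}}H^{\mathbf{h}}$ and $-(2k+1)i(k)J^{\mathbf{j}}I^{\widetilde{\mathbf{i}}}I_1H^{\mathbf{h}}$, then commute $J_1$ and $I_1$ to the right; the only nontrivial brackets $[J_1,H_{-m}]=J_{1-m}$ and $[I_1,H_{-m}]=-I_{1-m}$ yield lower-mode corrections fed into $w_1$, while $J_1 w_\phi=\phi(J_1)w_\phi$ and $I_1 w_\phi=\phi(I_1)w_\phi$ supply exactly the two main terms of \eqref{equ13}. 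The proof of \eqref{equ14} is parallel, with the only differences that $[H_{k+1},H^{\mathbf{h}}]$ is purely central (hence of degree $0$, still $<|(\mathbf{j},\mathbf{i},\mathbf{h})|-k$) and that (ii) contributes the coefficients $+i(k)$ and $-j(k)$ in place of $-(2k+1)i(k)$ and $-(2k+1)j(k)$.

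The main obstacle is not conceptual but organizational: one must verify the uniform degree bound $\max\deg(w_s)<|(\mathbf{j},\mathbf{i},\mathbf{h})|-k$ at each stage of the Leibniz expansion and after transporting $I_1,J_1$ through $H^{\mathbf{h}}$. Unlike Lemma~\ref{2llfffd}(iii), no separate $k=0$ case is needed here, since the assumption $h(s)=0$ for $s\leq k$ already rules out any Cartan-type $H_0$ complication.
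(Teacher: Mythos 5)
Your proposal is correct and follows the paper's proof essentially verbatim: the same factorization $J^{\mathbf{j}}=J^{\mathbf{j'}}J_{-k}^{j(k)}$, $I^{\mathbf{i}}=I^{\mathbf{i'}}I_{-k}^{i(k)}$, the same five-term Leibniz expansion, the same use of part~(ii) on the two inner terms, and the same bookkeeping to show the remainder has degree strictly less than $|(\mathbf{j},\mathbf{i},\mathbf{h})|-k$. Your observation that no separate $k=0$ case is needed (in contrast to Lemma~\ref{2llfffd}(iii)) is accurate and is why the paper's proof of this lemma is cleaner.

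One small slip worth flagging: in your discussion of \eqref{equ14} you assert that $[H_{k+1},H^{\mathbf{h}}]$ is ``purely central (hence of degree $0$).'' This is not quite right. In $\widetilde{\mathcal{G}}$ one has $[H_{k+1},H_{-m}]=(k+1)\delta_{m,k+1}C_3$, so $[H_{k+1},H^{\mathbf{h}}]$ equals $h(k+1)(k+1)C_3$ times a monomial in the $H_{-m}$ with one factor $H_{-(k+1)}$ removed; it is $C_3$ times a non-central element of degree $|\mathbf{h}|-(k+1)$, not a central element of degree $0$ (and in $\mathcal{G}$ it simply vanishes). The degree bound you need, namely that $J^{\mathbf{j}}I^{\mathbf{i}}[H_{k+1},H^{\mathbf{h}}]w_\phi$ has degree at most $|(\mathbf{j},\mathbf{i},\mathbf{h})|-(k+1)<|(\mathbf{j},\mathbf{i},\mathbf{h})|-k$, still holds for the same reason it does for $[L_{k+1},H^{\mathbf{h}}]$, so the conclusion is unaffected; only the justification you offered for it is off.
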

\begin{proof}

(i) and (ii) follow from similar arguments as those in the proof of Lemma \ref{2llfffd} (i), (ii).

(iii) We only prove  \eqref{equ13}, since \eqref{equ14} can be managed similarly. Let
\begin{eqnarray*}
I^{\mathbf{i}}=I^{\mathbf{i'}}I_{-k}^{i(k)}\quad {\rm and }\quad J^{\mathbf{j}}=J^{\mathbf{j'}}J_{-k}^{j(k)},
\end{eqnarray*}
where $i'(s)=i(s), j'(s)=j(s)$ for all $s\neq k$ and $i'(k)=j'(k)=0$. Then
\begin{eqnarray*}\label{A031312}
[L_{k+1}, J^{\mathbf{j}}I^{\mathbf{i}} H^{\mathbf{h}}]w_{\phi}
&=&J^{\mathbf{j}}I^{\mathbf{i}}[L_{k+1}, H^{\mathbf{h}}]w_{\phi}
+J^{\mathbf{j}}[L_{k+1}, I^{\mathbf{i'}}I_{-k}^{i(k)}]H^{\mathbf{h}}w_{\phi}\nonumber\\
&&
+[L_{k+1}, J^{\mathbf{j'}}J_{-k}^{j(k)}]I^{\mathbf{i}}H^{\mathbf{h}}w_{\phi}\nonumber\\
&=&J^{\mathbf{j}}I^{\mathbf{i}}[L_{k+1}, H^{\mathbf{h}}]w_{\phi}
+J^{\mathbf{j}}[L_{k+1}, I^{\mathbf{i'}}]I_{-k}^{i(k)}H^{\mathbf{h}}w_{\phi}+J^{\mathbf{j}}I^{\mathbf{i'}}[L_{k+1}, I_{-k}^{i(k)}]H^{\mathbf{h}}w_{\phi}\nonumber\\
&&
+[L_{k+1}, J^{\mathbf{j'}}]J_{-k}^{j(k)}I^{\mathbf{i}} H^{\mathbf{h}}w_{\phi}+J^{\mathbf{j'}}[L_{k+1}, J_{-k}^{j(k)}]I^{\mathbf{i}} H^{\mathbf{h}}w_{\phi}.
\end{eqnarray*}
It follows from the assumption of $k$ that $[L_{k+1}, H^{\mathbf{h}}], [L_{k+1}, I^{\mathbf{i'}}], [L_{k+1}, J^{\mathbf{j'}}]\in U(\mathcal{B}^{-})$ and ${\rm max\,}{\rm deg\,}(J^{\mathbf{j}}I^{\mathbf{i}}[L_{k+1}, H^{\mathbf{h}}]w_{\phi}), {\rm max\,}{\rm deg\,}(J^{\mathbf{j}}[L_{k+1}, I^{\mathbf{i'}}]I_{-k}^{i(k)}H^{\mathbf{h}}w_{\phi}), {\rm max\,}{\rm deg\,}([L_{k+1}, J^{\mathbf{j'}}]J_{-k}^{j(k)}I^{\mathbf{i}}H^{\mathbf{h}}w_{\phi})$ are all strictly smaller than $|(\mathbf{j},\mathbf{i},\mathbf{h})|-k$.
Now using (2) to the third term and last term, we see that
\begin{eqnarray*}\label{A0313}
\!\!\!\!\!\!
J^{\mathbf{j}}I^{\mathbf{i'}}[L_{k+1}, I_{-k}^{i(k)}]H^{\mathbf{h}}w_{\phi}
&\!\!\!=\!\!\!&u_1-(2k+1)i(k)\phi(I_1)J^{\mathbf{j}}I^{\widetilde{\mathbf{i}}} H^{\mathbf{h}}w_\phi,\\
\!\!\!\!\!\!
J^{\mathbf{j'}}[L_{k+1}, J_{-k}^{j(k)}])I^{\mathbf{i}} H^{\mathbf{h}}w_{\phi}
&\!\!\!=\!\!\!&u_2-(2k+1)j(k)\phi(J_1) J^{\widetilde{\mathbf{j}}}I^{\mathbf{i}} H^{\mathbf{h}}w_\phi,
\end{eqnarray*}
where ${\rm max\,}{\rm deg\,}(u_s)<|(\mathbf{j},\mathbf{i},\mathbf{h})|-k$ for $s=1 ,2$. While $\mathbf{i}$ and $\mathbf{j}$ satisfy $\widetilde{i}(m)=i(m), \widetilde{j}(m)=j(m)$ if $m\neq k$ and $\widetilde{i}(k)=i(k)-1, \widetilde{j}(k)=j(k)-1$. The proof is complete.
\end{proof}
\begin{lemm}\label{2llfff}
Let $\phi:\mathcal{G}^{+}=\widetilde{\mathcal{G}}^{+}\longrightarrow\C$ be a Lie algebra homomorphism, and $\xi\in\C^3$. Then all nonzero Whittaker vectors in $M(\phi)$, $\widetilde{M}(\phi)$ and $\widetilde{L}_{\phi,\xi}$ are of type $\phi$.
\end{lemm}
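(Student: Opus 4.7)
The plan is to suppose $v$ is a nonzero Whittaker vector of some type $\psi:\mathcal{G}^{+}\to\C$ in $M(\phi)$ (or in $\widetilde{M}(\phi)$ or $\widetilde{L}_{\phi,\xi}$) and to deduce $\psi=\phi$ by a filtration argument. Expanding $v=\sum_{u}c_{u}\,u\,w_{\phi}$ in the PBW basis from \eqref{def2.11}, Remark~\ref{rem} immediately yields, for every $x\in\mathcal{G}^{+}$,
\[(\psi(x)-\phi(x))\,v=\sum_{u}c_{u}\,[x,u]\,w_{\phi}. \qquad (\star)\]

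I would introduce a size function on PBW monomials designed to strictly decrease under the operation $u\mapsto [x,u]w_{\phi}$ for $x\in\mathcal{G}^{+}$. For $u=J^{\mathbf{j}}I^{\mathbf{i}}H^{\mathbf{h}}L^{\mathbf{l}}$ (or $u=C^{\alpha}J^{\mathbf{j}}I^{\mathbf{i}}H^{\mathbf{h}}L^{\mathbf{l}}$ in the central-extension case), set
\[s(u):=|(\mathbf{j},\mathbf{i},\mathbf{h},\mathbf{l})|+\mathrm{length}(u),\]
where $\mathrm{length}(u)$ counts all PBW factors (including $L_{0},H_{0},I_{0},J_{0}$ and any central $C^{\alpha}$). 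The heart of the proof is the following \emph{key claim}, to be established by induction on $s(u)$: for every $x\in\mathcal{G}^{+}$ and every PBW monomial $u$, the expansion of $[x,u]\,w_{\phi}$ in the PBW basis involves only basis vectors of size strictly less than $s(u)$.

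For the inductive step I would split $u=u_{1}u'$ with $u_{1}$ a single PBW generator and apply the derivation property $[x,u_{1}u']=[x,u_{1}]u'+u_{1}[x,u']$. The second summand is handled by the inductive hypothesis on $u'$, followed by PBW-reordering of $u_{1}\cdot(\text{basis term})$, which adds at most $|u_{1}|+1$ to the size and keeps us below $s(u)$. The first summand splits according to where $[x,u_{1}]$ lies: if $[x,u_{1}]\in\mathcal{B}^{-}$ (including the $\mathcal{G}^{0}$ case), the length is preserved but the $L_{0}$-degree drops by $\deg(x)\geq 1$, so the size strictly decreases; if $[x,u_{1}]\in\mathcal{G}^{+}$, I would rewrite $[x,u_{1}]u'=u'[x,u_{1}]+[[x,u_{1}],u']$, absorb $[x,u_{1}]w_{\phi}=\phi([x,u_{1}])w_{\phi}$ (cutting length by one), and invoke induction on $[[x,u_{1}],u']w_{\phi}$.

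Once the key claim is established, set $S=\max\{s(u):c_{u}\neq 0\}$; then the right-hand side of $(\star)$ has size at most $S-1$, whereas the left-hand side has size exactly $S$ whenever $\psi(x)\neq\phi(x)$, since the maximum-size PBW monomials of $v$ are linearly independent. This contradiction forces $\psi(x)=\phi(x)$ for all $x\in\mathcal{G}^{+}$, hence $\psi=\phi$. The same argument applies verbatim to $\widetilde{M}(\phi)$ (central generators commute with all of $\widetilde{\mathcal{G}}$, are inert in every commutator with $x\in\widetilde{\mathcal{G}}^{+}$, and simply contribute to $s$) and to $\widetilde{L}_{\phi,\xi}$ (the filtration descends to the quotient). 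The main technical obstacle lies in the key claim itself: when $[x,u_{1}]\in\mathcal{G}^{+}$, the intermediate commutator $[[x,u_{1}],u']$ may itself produce further $\mathcal{G}^{+}$-contributions during its reduction, and one must verify that each step in the cascade continues to preserve strict decrease of $s$.
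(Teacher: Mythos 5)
Your proposal is correct, and it takes a route that is close in spirit to the paper's but differs in the bookkeeping. Both arguments are leading-term arguments: one expands the candidate Whittaker vector $w$ in the PBW basis of $M(\phi)$, isolates the ``top'' of a suitable filtration on PBW monomials, and shows that the commutator $[E,u]w_\phi$ (for $E\in\{L_1,L_2,H_1,I_1,J_1\}$) cannot contribute to that top, so $\phi(E)$ can be read off. The paper implements this with the bi-filtration given lexicographically by $\big(\deg, j(0)+i(0)+h(0)+l(0)\big)$: it fixes $N=\max\deg$ and $\widetilde K=\max_{\mathcal G_0}$ among degree-$N$ terms, and then asserts (``from Remark \ref{rem} and direct calculation'', equation \eqref{A11112}) that $[E,u]w_\phi$ lands either in strictly lower total degree or, if degree is preserved, consumes at least one degree-zero factor. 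You instead use the single scalar invariant $s(u)=|(\mathbf j,\mathbf i,\mathbf h,\mathbf l)|+\mathrm{length}(u)$, where length counts \emph{all} PBW factors rather than just the $\mathcal G_0$ ones, and prove the key size-decrease claim by an explicit induction on $s$. The two filtrations are genuinely different (yours is a single totally ordered grade; the paper's is a lexicographic pair), but both are strictly decreased by $\mathrm{ad}(\mathcal G^+)$ applied to $U(\mathcal B^-)w_\phi$, so both work. Your version has the advantage of being self-contained and not relying on the paper's Lemmas \ref{2llfffd}--\ref{2llfffd2} for the degree bookkeeping; the inductive split $u=u_1u'$, with the three cases according to whether $[x,u_1]$ lies in $\mathcal B^-$, vanishes, or lies in $\mathcal G^+$, cleanly closes the ``cascade'' you worry about at the end: when $[x,u_1]\in\mathcal G^+$ is a positive-weight generator $G_p$, the term $[[x,u_1],u']w_\phi$ is exactly an instance of the key claim applied to the pair $(G_p,u')$ with $s(u')<s(u)$, so the induction hypothesis already covers it and no additional cascade analysis is required. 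One small clarification worth making: in $\widetilde{M}(\phi)$ the coefficients are polynomials $f_u(C)\in\C[C_1,C_2,C_3]$, and since the $C_i$ are central one has $[x,f_u(C)u]=f_u(C)[x,u]$, so whether one counts the $C$-monomials in $\mathrm{length}$ is immaterial; it is cleanest to ignore them.
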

\begin{proof} We only prove the assertion for the case $\widetilde{M}(\phi)$. Similar arguments yield the assertion for the cases $M(\phi)$ and $\widetilde{L}_{\phi,\xi}$. For that, let $w=uw_\phi\in \widetilde{M}(\phi)$ be an arbitrary nonzero vector, where $u\in U(\widetilde{\mathcal{B}}^{-})$. We can write $w$ as a linear combination of basis \eqref{def2.11} of $\widetilde{M}(\phi)$:
\begin{equation}\label{def2.12}
w=\sum_{(\mathbf{j},\mathbf{i},\mathbf{h},\mathbf{l})\in P^{4}}f_{\mathbf{j},\mathbf{i},\mathbf{h},\mathbf{l}}(C)J^{\mathbf{j}}I^{\mathbf{i}} H^{\mathbf{h}}L^{\mathbf{l}}w_\phi,
\end{equation}
where $f_{\mathbf{j},\mathbf{i},\mathbf{h},\mathbf{l}}(C)\in\C[C_1, C_2, C_3]$. Set
\begin{eqnarray}
\label{asd1}&N:={\rm max}\{|(\mathbf{j},\mathbf{i},\mathbf{h},\mathbf{l})|\big| f_{\mathbf{j},\mathbf{i},\mathbf{h},\mathbf{l}}(C)\neq0\}, \\
\label{asd2}&\Lambda_{N}:=\{(\mathbf{j},\mathbf{i},\mathbf{h},\mathbf{l})\big|  f_{\mathbf{j},\mathbf{i},\mathbf{h},\mathbf{l}}(C)\neq0, |(\mathbf{j},\mathbf{i},\mathbf{h},\mathbf{l})|=N \},\\
\nonumber&{\rm max\,}_{\mathcal{G}_0}(w):={\rm max\,}\{j(0)+i(0)+h(0)+l(0)\big|  f_{\mathbf{j},\mathbf{i},\mathbf{h},\mathbf{l}}(C)\neq0\}.
\end{eqnarray}
Suppose $\varphi:\widetilde{\mathcal{G}}^{+}\longrightarrow\C$ is another  Lie algebra homomorphism which is different from $\phi$. Then there exists at least one element in $\{L_1,L_2,H_1,I_1, J_1\}$, denoted by $E$, such that $\phi(E)\neq\varphi(E)$. Assume $w$ is a Whittaker vector of type $\varphi$, then  by the definition we have
\begin{eqnarray}\label{A111121}
Ew&=&\varphi(E)w\nonumber\\
&=&
\sum_{(\mathbf{j},\mathbf{i},\mathbf{h},\mathbf{l})\in\Lambda_{N}} \varphi(E)f_{\mathbf{j},\mathbf{i},\mathbf{h},\mathbf{l}}(C)J^{\mathbf{j}}I^{\mathbf{i}} H^{\mathbf{h}}L^{\mathbf{l}}w_\phi
\nonumber\\
&&
+\sum_{(\mathbf{j},\mathbf{i},\mathbf{h},\mathbf{l})\not\in\Lambda_{N}} \varphi(E)f_{\mathbf{j},\mathbf{i},\mathbf{h},\mathbf{l}}(C)J^{\mathbf{j}}I^{\mathbf{i}} H^{\mathbf{h}}L^{\mathbf{l}}w_\phi.
\end{eqnarray}
On the other hand, if we denote
$$\widetilde{K}:={\rm max\,}\{j(0)+i(0)+h(0)+l(0)\mid (\mathbf{j},\mathbf{i},\mathbf{h},\mathbf{l})\in\Lambda_{N}\},$$
then from Remark \ref{rem} and direct calculation, we have
\begin{eqnarray}\label{A11112}
\!\!\!\!\!\!
Ew
&\!\!\!=\!\!\!&\big([E,u]+uE\big)w_\phi\nonumber\\
&\!\!\!=\!\!\!&
\sum_{(\mathbf{j},\mathbf{i},\mathbf{h},\mathbf{l})\in\Lambda_{N}\atop j(0)+i(0)+h(0)+l(0)=\widetilde{K}} \phi(E)f_{\mathbf{j},\mathbf{i},\mathbf{h},\mathbf{l}}(C)J^{\mathbf{j}}I^{\mathbf{i}} H^{\mathbf{h}}L^{\mathbf{l}}w_\phi+w'+w'',
\end{eqnarray}
where ${\rm max\,}{\rm deg\,}(w')<N$ and  ${\rm max\,}_{\mathcal{G}_0}(w'')<\widetilde{K}$. By comparing \eqref{A111121} and \eqref{A11112}, we obtain $\phi(E)=\varphi(E)$, which contradicts the choice of $E$. The proof is complete. \end{proof}

We have the following classification of isomorphism classes of universal and generic  Whittaker $\mathcal{G}$-modules (resp. $\widetilde{\mathcal{G}}$-modules).
\begin{coro}\label{iso cor}
Let $\phi:\mathcal{G}^{+}=\widetilde{\mathcal{G}}^{+}\longrightarrow\C$ and $\phi':\mathcal{G}^{+}=\widetilde{\mathcal{G}}^{+}\longrightarrow\C$ be Lie algebra homomorphisms, and $\xi, \xi'\in\C^3$. Then the following statements hold.
\begin{itemize}
\item[(i)] $M(\phi)\cong M(\phi')$ as $\mathcal{G}$-modules if and only if $\phi=\phi'$.
\item[(ii)] $\widetilde{M}(\phi)\cong \widetilde{M}(\phi')$ as $\widetilde{\mathcal{G}}$-modules if and only if $\phi=\phi'$.
\item[(iii)]$\widetilde{L}_{\phi,\xi}\cong \widetilde{L}_{\phi',\xi'}$ as $\widetilde{\mathcal{G}}$-modules if and only if $\phi=\phi'$ and $\xi=\xi'$.
\end{itemize}
\end{coro}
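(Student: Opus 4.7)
All three ``if'' directions are immediate: if $\phi=\phi'$ (and $\xi=\xi'$ in part (iii)), the modules are defined by identical data and hence literally coincide. The content lies in the ``only if'' directions, and the plan is to leverage Lemma \ref{2llfff} directly. That lemma pins down the \emph{type} of every nonzero Whittaker vector in $M(\phi)$, $\widetilde{M}(\phi)$ and $\widetilde{L}_{\phi,\xi}$, which is exactly the invariant needed to compare two of these modules under an abstract isomorphism.

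For part (i), suppose $\Psi:M(\phi)\to M(\phi')$ is a $\mathcal{G}$-module isomorphism. Since the cyclic generator $w_\phi\in M(\phi)$ satisfies $xw_\phi=\phi(x)w_\phi$ for every $x\in\mathcal{G}^{+}$, applying $\Psi$ (a module map) gives $x\Psi(w_\phi)=\phi(x)\Psi(w_\phi)$, so $\Psi(w_\phi)$ is a nonzero Whittaker vector of type $\phi$ inside $M(\phi')$. Lemma \ref{2llfff} forces every nonzero Whittaker vector of $M(\phi')$ to be of type $\phi'$, hence $\phi(x)=\phi'(x)$ for all $x\in\mathcal{G}^{+}$, i.e.\ $\phi=\phi'$. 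The argument for part (ii) is verbatim the same with $\widetilde{M}$ replacing $M$ and $\widetilde{\mathcal{G}}$ replacing $\mathcal{G}$.

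For part (iii), the same argument applied to $\widetilde{L}_{\phi,\xi}$ and $\widetilde{L}_{\phi',\xi'}$ (using the corresponding case of Lemma \ref{2llfff}) immediately yields $\phi=\phi'$. It remains to show $\xi=\xi'$. Here we use that $C_1,C_2,C_3$ lie in the center of $U(\widetilde{\mathcal{G}})$ and that, by the defining quotient \eqref{uniw}, the cyclic image $\overline{w_\phi}\in\widetilde{L}_{\phi,\xi}$ satisfies $(C_s-\xi_s)\overline{w_\phi}=0$. Centrality of $C_s$ then propagates this to the whole cyclic module, so $C_s$ acts as the scalar $\xi_s$ on all of $\widetilde{L}_{\phi,\xi}$, and likewise as $\xi_s'$ on $\widetilde{L}_{\phi',\xi'}$. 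Comparing the two actions under any isomorphism $\Psi:\widetilde{L}_{\phi,\xi}\to\widetilde{L}_{\phi',\xi'}$ on any nonzero vector gives $\xi_s=\xi_s'$ for $s=1,2,3$.

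There is no real obstacle here: the statement is essentially a corollary of Lemma \ref{2llfff} together with cyclicity of the Whittaker generator and the fact that the central character is recoverable from the scalar action of $C_1,C_2,C_3$ on that generator. The only points to be careful about are (a) explicitly verifying that a module isomorphism sends a type-$\phi$ Whittaker vector to a type-$\phi$ Whittaker vector (automatic from the intertwining property), and (b) recording that an isomorphism between the central quotients must preserve the central character $\xi$.
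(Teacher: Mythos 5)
Your proposal is correct and follows essentially the same route as the paper: both use the intertwining property to transport the cyclic Whittaker vector, invoke Lemma \ref{2llfff} to identify the type, and in part (iii) use the central action of $C_1,C_2,C_3$ to recover $\xi$. The extra detail you give on the ``if'' direction and on centrality propagating the scalar action from the generator to the whole cyclic module is sound and matches the paper's intent.
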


\begin{proof}
(i) Let $w$ be a cyclic Whittaker vector of $M(\phi)$.  Suppose that $\rho: M(\phi)\to M(\phi')$ is an isomorphism of $\mathcal{G}$-modules. Then
\begin{equation*}E \rho(w)=\rho(E w)=\phi(E)\rho(w), \quad \forall\, E\in\mathcal{G}^+.\end{equation*}
Thus $\rho(w)$ is a nonzero Whittaker vector of type $\phi$, which implies $\phi=\phi'$ by Lemma \ref{2llfff}.

(ii) follows from similar arguments as (i).

(iii) Let $v$ be a cyclic Whittaker vector of $\widetilde{L}_{\phi,\xi}$. Suppose that $\varrho: \widetilde{L}_{\phi,\xi}\to \widetilde{L}_{\phi',\xi'}$ is an isomorphism of $\widetilde{\mathcal{G}}$-modules. Similar arguments as (i) yield that $\phi=\phi'$. Moreover,
\begin{equation*}\xi_s \varrho(v)=\varrho(C_s v)=C_s \rho(v)=\xi'_{s}\varrho(v),\quad 1\leq s\leq 3.\end{equation*}
Hence $\xi=\xi'$.
\end{proof}

Denote by $M(\phi)_\phi$ (resp. $\widetilde{M}(\phi)_\phi$) the set of all Whittaker vectors in $M(\phi)$ (resp. $\widetilde{M}(\phi)$). The following result precisely determines all Whittaker vectors in $M(\phi)$ and $\widetilde{M}(\phi)$ when $\phi$ is nonsingular.
\begin{prop}\label{vec1}
If $\phi$ is nonsingular, then $M(\phi)_\phi=\C w_\phi$ and $\widetilde{M}(\phi)_\phi=\C[C_1, C_2, C_3]w_\phi$.
\end{prop}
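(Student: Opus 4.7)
The plan is to argue by contradiction, reducing a hypothetical nonzero ``extra'' Whittaker vector to its top-degree part and then to a single PBW basis element, which cannot exist. We focus on the central-extension case $\widetilde{M}(\phi)$; the case $M(\phi)$ is parallel and strictly simpler. Write an arbitrary $w \in \widetilde{M}(\phi)_\phi$ in the PBW basis \eqref{def2.12}, and suppose for contradiction that some summand with $(\mathbf{j},\mathbf{i},\mathbf{h},\mathbf{l}) \neq \mathbf{0}$ has nonzero coefficient. Let $N = {\rm max\,}{\rm deg\,}(w) \geq 1$; we proceed by downward induction on $N$, with a secondary induction (described below) inside each fixed $N$.

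The principal step runs as follows. Let $k_0 \geq 0$ be the smallest integer such that $l(k_0) \neq 0$ or $h(k_0) \neq 0$ for some $(\mathbf{j},\mathbf{i},\mathbf{h},\mathbf{l}) \in \Lambda_N$ with nonzero coefficient (set $k_0 = \infty$ if none exists). In the main case $k_0 \geq 1$, apply both $I_{k_0+1}$ and $J_{k_0+1}$; since $\phi$ vanishes on both, $I_{k_0+1}w = J_{k_0+1}w = 0$ by Remark \ref{rem}. Lemma \ref{2llfffd}(i) shows that contributions from basis elements of degree below $N$ land in degree $<N-k_0$, and tracking when $n_2=1$ in its proof actually arises shows that top-degree elements whose minimal index in $\mathbf{l},\mathbf{h}$ exceeds $k_0$ also contribute only in degree $<N-k_0$. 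Only top-degree elements with min index exactly $k_0$ survive to degree $N-k_0$, and Lemma \ref{2llfffd}(iii) writes their leading contributions explicitly. The two resulting identities, after dividing by $\phi(I_1)$ and $\phi(J_1)$ (both nonzero by nonsingularity), share identical ``remove one $L_{-k_0}$'' terms but have opposite signs on the ``remove one $H_{-k_0}$'' terms (compare \eqref{equ11} and \eqref{equ12}). Adding and subtracting decouples them into
\begin{equation*}
\sum f_{\mathbf{j},\mathbf{i},\mathbf{h},\mathbf{l}}(C)(2k_0+1)l(k_0)\,J^{\mathbf{j}} I^{\mathbf{i}} H^{\mathbf{h}} L^{\widetilde{\mathbf{l}}} w_\phi = 0, \qquad \sum f_{\mathbf{j},\mathbf{i},\mathbf{h},\mathbf{l}}(C)\,h(k_0)\,J^{\mathbf{j}} I^{\mathbf{i}} H^{\widetilde{\mathbf{h}}} L^{\mathbf{l}} w_\phi = 0.
\end{equation*}
Since the maps $\mathbf{l}\mapsto\widetilde{\mathbf{l}}$ (restricted to partitions with $l(k_0)>0$) and $\mathbf{h}\mapsto\widetilde{\mathbf{h}}$ (restricted to $h(k_0)>0$) are injective, PBW independence forces every offending $f_{\mathbf{j},\mathbf{i},\mathbf{h},\mathbf{l}}(C)$ to vanish, contradicting the choice of $k_0$.

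The remaining cases follow the same template. For $k_0 = 0$, the ``$w_1''$'' remainder in Lemma \ref{2llfffd}(iii) lies in the strictly smaller region ${\rm max\,}_{L_0,H_0}<l(0)+h(0)-1$, so a secondary induction on ${\rm max\,}_{L_0,H_0}$ at degree $N$ using the same $I_1/J_1$ pairing concludes. For $k_0 = \infty$ (all top-degree basis elements have $\mathbf{l}=\mathbf{h}=\mathbf{0}$, so $\mathbf{j}$ or $\mathbf{i}$ carries the weight), pick $k_0'\geq 1$ minimal with $i(k_0')\neq 0$ or $j(k_0')\neq 0$ for some top-degree element, and apply the pair $L_{k_0'+1}, H_{k_0'+1}$; Lemma \ref{2llfffd2}(iii) plays the role of Lemma \ref{2llfffd}(iii). (For $k_0'\geq 2$, $\phi(L_{k_0'+1})=\phi(H_{k_0'+1})=0$; for $k_0'=1$ one carries $\phi(L_2)$ and $\phi(H_1)$ corrections, but these contribute only at degree $N$ and do not affect the decoupling at degree $N-k_0'$.) When finally $N=0$, the support reduces to $w=\sum g_{abcd}(C) J_0^a I_0^b H_0^c L_0^d w_\phi$; applying $(I_1-\phi(I_1))w=(J_1-\phi(J_1))w=0$ together with $[I_1, L_0]=[I_1, H_0]=-I_1$, $[J_1, L_0]=-J_1$, $[J_1, H_0]=J_1$ produces the analogous decoupled system and kills every $g_{abcd}$ except at $(a,b,c,d)=(0,0,0,0)$, so $w\in\C[C_1,C_2,C_3]w_\phi$ as required.

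The main obstacle is the potential cancellation among leading terms from different top-degree basis elements: two distinct PBW elements may be sent to the same ``lowered'' PBW element by a single raising operator. The essential device is the pairing of $I_{k+1}$ with $J_{k+1}$ (respectively $L_{k+1}$ with $H_{k+1}$), whose leading-term formulas in Lemmas \ref{2llfffd} and \ref{2llfffd2} agree on one species of reduction and disagree in sign on the other, letting us isolate each species. The nonsingularity hypothesis $\phi(I_1)\phi(J_1)\neq 0$ is precisely what permits dividing by both $\phi(I_1)$ and $\phi(J_1)$ to make the decoupling meaningful; in the singular case this mechanism collapses, in line with the richer submodule structure discussed in Section 4.
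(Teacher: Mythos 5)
Your proposal runs along essentially the same track as the paper's argument: the same Lemmas \ref{2llfffd} and \ref{2llfffd2}, the same device of pairing $I_{k+1}$ with $J_{k+1}$ (resp. $L_{k+1}$ with $H_{k+1}$), and the same use of nonsingularity to decouple the ``lower one $L$'' from the ``lower one $H$'' leading terms (resp. lower one $I$ from lower one $J$). Your reorganization of the case analysis — fixing $k_0$ as the minimal relevant index among the \emph{top-degree} PBW monomials $\Lambda_N$ only, rather than the paper's more elaborate bookkeeping with $N'$, $N''$, $\Lambda_{N'}$, $\Lambda_{N''}$ — is arguably cleaner, since Lemma \ref{2llfffd}(i) guarantees that all lower-degree monomials contribute strictly below degree $N - k_0$ after applying a raising operator of depth $k_0 + 1$.

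There is, however, a genuine gap in the base case. When $N = 0$ you write $w = \sum g_{abcd}(C)\,J_0^a I_0^b H_0^c L_0^d w_\phi$ and claim that the pair $I_1, J_1$ kills every $g_{abcd}$ with $(a,b,c,d)\neq(0,0,0,0)$. This is false for the subfamily with $c = d = 0$: since the ideal $\mathcal{I}\oplus\mathcal{J}$ is abelian, $[I_1, J_0^a I_0^b] = [J_1, J_0^a I_0^b] = 0$, so $(I_1-\phi(I_1))$ and $(J_1-\phi(J_1))$ act as zero on $J_0^a I_0^b w_\phi$ and impose no constraint whatsoever on $g_{ab00}$. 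These coefficients can only be controlled with the $L_1$/$H_1$ pair (cf. Lemma \ref{2llfffd2}(iii) at $k = 0$, and the paper's Case 2, Subcases 2 and 3). Relatedly, your $k_0 = \infty$ branch insists on $k_0' \geq 1$, which is the right move for $N \geq 1$ (the $I_0^{i(0)}, J_0^{j(0)}$ factors are harmless there since $[L_{k_0'+1}, I_0], [L_{k_0'+1}, J_0]$ produce $I_{k_0'+1}, J_{k_0'+1}$ with $k_0'+1 \geq 2$, which annihilate $w_\phi$), but that restriction is exactly what prevents the branch from handling the $N=0$ leftovers. You need to allow the $L_1$/$H_1$ pairing at depth $0$, or equivalently merge the $N=0$ case into the $k_0 = \infty$ branch with $k_0' = 0$ permitted, taking care that Lemma \ref{2llfffd2}(iii) requires $h(s)=0$ for $s \leq k$ (which holds here since $k_0 = \infty$ forces $\mathbf{h}=\mathbf{0}$ at top degree). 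As a minor remark, the framing as ``downward induction on $N$'' is unnecessary — the argument is a direct contradiction: a nonzero offending coefficient forces some $(E_n - \phi(E_n))w \neq 0$ — but that is cosmetic.
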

\begin{proof}
We only prove the assertion for the case $\widetilde{M}(\phi)$. Similar arguments yield the assertion for the case $M(\phi)$.

It is obvious that $\C[C_1, C_2, C_3]w_\phi\subseteq M(\phi)_\phi$ as $\C[ C_1, C_2, C_3]$ is in the center of $U(\widetilde{\mathcal{G}})$. It suffices to prove that $M(\phi)_\phi\subseteq\C[C_1, C_2, C_3]w_\phi$.

For $w$ defined as in \eqref{def2.12}, we want to show that if there is $(\mathbf{0},\mathbf{0},\mathbf{0},\mathbf{0})\neq(\mathbf{j},\mathbf{i},\mathbf{h},\mathbf{l})\in P^{4}$, such that $f_{\mathbf{j},\mathbf{i},\mathbf{h},\mathbf{l}}(C)\neq0$, then there is $E_n\in\{L_n, H_n, I_n, J_n\mid n\in\N\}$, such that $\big(E_n-\phi(E_n)\big)w\neq0$, from which the assertion follows. Also, we use the notations $N$ and $\Lambda_{N}$ defined in \eqref{asd1} and \eqref{asd2}.

Assume that $f_{\mathbf{j},\mathbf{i},\mathbf{h},\mathbf{l}}(C)\neq0$ for some $(\mathbf{j},\mathbf{i},\mathbf{h},\mathbf{l})\neq(\mathbf{0},\mathbf{0},\mathbf{0},\mathbf{0})$. By Remark \ref{rem},
\begin{equation*}\big(E_n-\phi(E_n)\big)w=[E_n, \sum_{(\mathbf{j},\mathbf{i},\mathbf{h},\mathbf{l})\in P^{4}} f_{\mathbf{j},\mathbf{i},\mathbf{h},\mathbf{l}}(C)J^{\mathbf{j}}I^{\mathbf{i}} H^{\mathbf{h}}L^{\mathbf{l}}]w_\phi.
\end{equation*}

\textbf{Case 1.} There exists some $\mathbf{l}\neq\mathbf{0}$ for which $f_{\mathbf{j},\mathbf{i},\mathbf{h},\mathbf{l}}(C)\neq0$.

Denote
$$N^{\prime}:={\rm max}\{|(\mathbf{j},\mathbf{i},\mathbf{h},\mathbf{l})|\,\big| \,\mathbf{l}\neq\mathbf{0}, f_{\mathbf{j},\mathbf{i},\mathbf{h},\mathbf{l}}(C)\ne0\}.$$
Set
\begin{eqnarray*}
&\Lambda_{N^{\prime}}:=\{(\mathbf{j},\mathbf{i},\mathbf{h},\mathbf{l})\in P^{4}\big|  |(\mathbf{j},\mathbf{i},\mathbf{h},\mathbf{l})|=N^{\prime}, f_{\mathbf{j},\mathbf{i},\mathbf{h},\mathbf{l}}(C)\neq0 \},\\
&k^{\prime}:={\rm min}\{m\in\Z_+\mid l(m)\neq0\quad  {\rm or}\quad  h(m)\neq0\quad {\rm for\ some} \quad  (\mathbf{j},\mathbf{i},\mathbf{h},\mathbf{l})\in\Lambda_{N'}\}
\end{eqnarray*}
and
$$K':={\rm max\,}\{l(k')+h(k')\mid (\mathbf{j},\mathbf{i},\mathbf{h},\mathbf{l})\in\Lambda_{N'}\}.$$
Note that $N^{\prime}\leq N$ and $\Lambda_{N^{\prime}}$ is a nonempty set.

\textbf{Subcase 1.} $N^{\prime}=N$.

In this subcase, we compute
\begin{eqnarray*}\label{case11}
(I_{k'+1}-\phi(I_{k'+1}))w&=&\sum_{|(\mathbf{j},\mathbf{i},\mathbf{h},\mathbf{l})|<N} f_{\mathbf{j},\mathbf{i},\mathbf{h},\mathbf{l}}(C)[I_{k'+1},J^{\mathbf{j}}I^{\mathbf{i}} H^{\mathbf{h}}L^{\mathbf{l}}]w_\phi\nonumber\\
&&
+\sum_{(\mathbf{j},\mathbf{i},\mathbf{h},\mathbf{l})\in\Lambda_{N}\atop l(k')=h(k')=0} f_{\mathbf{j},\mathbf{i},\mathbf{h},\mathbf{l}}(C)[I_{k'+1},J^{\mathbf{j}}I^{\mathbf{i}} H^{\mathbf{h}}L^{\mathbf{l}}]w_\phi\nonumber\\
&&
+\sum_{(\mathbf{j},\mathbf{i},\mathbf{h},\mathbf{l})\in\Lambda_{N}\atop l(k')=0, h(k')\neq0} f_{\mathbf{j},\mathbf{i},\mathbf{h},\mathbf{l}}(C)[I_{k'+1},J^{\mathbf{j}}I^{\mathbf{i}} H^{\mathbf{h}}L^{\mathbf{l}}]w_\phi\nonumber\\
&&
+\sum_{(\mathbf{j},\mathbf{i},\mathbf{h},\mathbf{l})\in\Lambda_{N}\atop l(k')\neq0} f_{\mathbf{j},\mathbf{i},\mathbf{h},\mathbf{l}}(C)[I_{k'+1},J^{\mathbf{j}}I^{\mathbf{i}} H^{\mathbf{h}}L^{\mathbf{l}}]w_\phi.
\end{eqnarray*}
By using Lemma  \ref{2llfffd} (i) to the first summand , we know that its degree is strictly smaller than $N-k'$. As for the second summand, note that $l(s)=h(s)=0$ for $0\leq s\leq k'$, we have
$$[I_{k'+1},J^{\mathbf{j}}I^{\mathbf{i}} H^{\mathbf{h}}L^{\mathbf{l}}]=J^{\mathbf{j}}I^{\mathbf{i}}[I_{k'+1},  H^{\mathbf{h}}]L^{\mathbf{l}}+J^{\mathbf{j}}I^{\mathbf{i}}H^{\mathbf{h}}[I_{k'+1},  L^{\mathbf{l}}]\in U(\mathcal{B}^{-}).$$
Thus, its degree is also strictly smaller than $N-k'$. Now applying  Lemma  \ref{2llfffd} (iii) to the third summand and last summand, respectively, we know that it is of the form
\begin{eqnarray*}
&&\widetilde{v_1}-\phi(I_1)\sum_{(\mathbf{j},\mathbf{i},\mathbf{h},\mathbf{l})\in\Lambda_{N}\atop l(k')=0, h(k')\neq0} h(k')f_{\mathbf{j},\mathbf{i},\mathbf{h},\mathbf{l}}(C)J^{\mathbf{j}}I^{\mathbf{i}} H^{\widetilde{\mathbf{h}}}L^{\mathbf{l}}w_\phi\nonumber\\
&&-\phi(I_1)\sum_{(\mathbf{j},\mathbf{i},\mathbf{h},\mathbf{l})\in\Lambda_{N}\atop l(k')\neq0} (2k'+1)l(k')f_{\mathbf{j},\mathbf{i},\mathbf{h},\mathbf{l}}(C)J^{\mathbf{j}}I^{\mathbf{i}} H^{\mathbf{h}}L^{\widetilde{\mathbf{l}}}w_\phi\nonumber\\
&&-\phi(I_1)\sum_{(\mathbf{j},\mathbf{i},\mathbf{h},\mathbf{l})\in\Lambda_{N}\atop l(k')h(k')\neq0} h(k')f_{\mathbf{j},\mathbf{i},\mathbf{h},\mathbf{l}}(C)J^{\mathbf{j}}I^{\mathbf{i}} H^{\widetilde{\mathbf{h}}}L^{\mathbf{l}}w_\phi\nonumber\\
&=&\widetilde{v_1}-\phi(I_1)\sum_{(\mathbf{j},\mathbf{i},\mathbf{h},\mathbf{l})\in\Lambda_{N}\atop l(k')\neq0} (2k'+1)l(k')f_{\mathbf{j},\mathbf{i},\mathbf{h},\mathbf{l}}(C)J^{\mathbf{j}}I^{\mathbf{i}} H^{\mathbf{h}}L^{\widetilde{\mathbf{l}}}w_\phi\nonumber\\
&&-\phi(I_1)\sum_{(\mathbf{j},\mathbf{i},\mathbf{h},\mathbf{l})\in\Lambda_{N}\atop h(k')\neq0} h(k')f_{\mathbf{j},\mathbf{i},\mathbf{h},\mathbf{l}}(C)J^{\mathbf{j}}I^{\mathbf{i}} H^{\widetilde{\mathbf{h}}}L^{\mathbf{l}}w_\phi\nonumber\\
&=&v_1-\phi(I_1)\sum_{(\mathbf{j},\mathbf{i},\mathbf{h},\mathbf{l})\in\Lambda_{N}\atop l(k')\neq0, l(k')+h(k')=K'} (2k'+1)l(k')f_{\mathbf{j},\mathbf{i},\mathbf{h},\mathbf{l}}(C)J^{\mathbf{j}}I^{\mathbf{i}} H^{\mathbf{h}}L^{\widetilde{\mathbf{l}}}w_\phi\nonumber\\
&&-\phi(I_1)\sum_{(\mathbf{j},\mathbf{i},\mathbf{h},\mathbf{l})\in\Lambda_{N}\atop h(k')\neq0, l(k')+h(k')=K'} h(k')f_{\mathbf{j},\mathbf{i},\mathbf{h},\mathbf{l}}(C)J^{\mathbf{j}}I^{\mathbf{i}} H^{\widetilde{\mathbf{h}}}L^{\mathbf{l}}w_\phi,
\end{eqnarray*}
where  $v_1=v_{1}'+v_{1}''$ with ${\rm max\,}{\rm deg\,}(v_{1}')<N-k'$ and ${\rm max\,}_{L_{-k'}, H_{-k'}}(v_{1}'')<K'-1$. While $\widetilde{\mathbf{l}}$ and $\widetilde{\mathbf{h}}$ satisfy $\widetilde{l}(m)=l(m), \widetilde{h}(m)=h(m)$ if $m\neq k'$ and $\widetilde{l}(k')=l(k')-1, \widetilde{h}(k')=h(k')-1$. The preceding discussion shows finally that
\begin{eqnarray}\label{case114}\!\!\!\!\!\!
(I_{k'+1}-\phi(I_{k'+1}))w&\!\!\!=\!\!\!&w_{1}-\phi(I_1)\sum_{(\mathbf{j},\mathbf{i},\mathbf{h},\mathbf{l})\in\Lambda_{N}\atop l(k')\neq0, l(k')+h(k')=K'} (2k'+1)l(k')f_{\mathbf{j},\mathbf{i},\mathbf{h},\mathbf{l}}(C)J^{\mathbf{j}}I^{\mathbf{i}} H^{\mathbf{h}}L^{\widetilde{\mathbf{l}}}w_\phi\nonumber\\
&\!\!\!\!\!\!\!\!\!\!\!\!\!\!\!\!\!\!\!\!\!\!\!\!&
-\phi(I_1)\sum_{(\mathbf{j},\mathbf{i},\mathbf{h},\mathbf{l})\in\Lambda_{N}\atop h(k')\neq0, l(k')+h(k')=K'} h(k')f_{\mathbf{j},\mathbf{i},\mathbf{h},\mathbf{l}}(C)J^{\mathbf{j}}I^{\mathbf{i}} H^{\widetilde{\mathbf{h}}}L^{\mathbf{l}}w_\phi.
\end{eqnarray}
Analogously, we have
\begin{eqnarray}\label{case115}\!\!\!\!\!\!
(J_{k'+1}-\phi(J_{k'+1}))w&\!\!\!=\!\!\!&w_{2}-\phi(J_1)\sum_{(\mathbf{j},\mathbf{i},\mathbf{h},\mathbf{l})\in\Lambda_{N}\atop l(k')\neq0, l(k')+h(k')=K'} (2k'+1)l(k')f_{\mathbf{j},\mathbf{i},\mathbf{h},\mathbf{l}}(C)J^{\mathbf{j}}I^{\mathbf{i}} H^{\mathbf{h}}L^{\widetilde{\mathbf{l}}}w_\phi\nonumber\\
&\!\!\!\!\!\!\!\!\!\!\!\!\!\!\!\!\!\!\!\!\!\!\!\!&
+\phi(J_1)\sum_{(\mathbf{j},\mathbf{i},\mathbf{h},\mathbf{l})\in\Lambda_{N}\atop h(k')\neq0, l(k')+h(k')=K'} h(k')f_{\mathbf{j},\mathbf{i},\mathbf{h},\mathbf{l}}(C)J^{\mathbf{j}}I^{\mathbf{i}} H^{\widetilde{\mathbf{h}}}L^{\mathbf{l}}w_\phi.
\end{eqnarray}
Note that $w_{s} (s=1, 2)$ have the same property as that of $v_1$. Suppose on the contrary that for any $E_n\in\{L_n, H_n, I_n, J_n\mid n\in\N\}$, we have $\big(E_n-\phi(E_n)\big)w=0$. Especially, $(I_{k'+1}-\phi(I_{k'+1}))w=(J_{k'+1}-\phi(J_{k'+1}))w=0$. Now \eqref{case114} and \eqref{case115} yield
\begin{eqnarray*}
&&\phi(I_1)\sum_{(\mathbf{j},\mathbf{i},\mathbf{h},\mathbf{l})\in\Lambda_{N}\atop l(k')\neq0, l(k')+h(k')=K'} (2k'+1)l(k')f_{\mathbf{j},\mathbf{i},\mathbf{h},\mathbf{l}}(C)J^{\mathbf{j}}I^{\mathbf{i}} H^{\mathbf{h}}L^{\widetilde{\mathbf{l}}}w_\phi\nonumber\\
&=&-\phi(I_1)\sum_{(\mathbf{j},\mathbf{i},\mathbf{h},\mathbf{l})\in\Lambda_{N}\atop h(k')\neq0, l(k')+h(k')=K'} h(k')f_{\mathbf{j},\mathbf{i},\mathbf{h},\mathbf{l}}(C)J^{\mathbf{j}}I^{\mathbf{i}} H^{\widetilde{\mathbf{h}}}L^{\mathbf{l}}w_\phi
\end{eqnarray*}
and
\begin{eqnarray*}
&&\phi(J_1)\sum_{(\mathbf{j},\mathbf{i},\mathbf{h},\mathbf{l})\in\Lambda_{N}\atop l(k')\neq0, l(k')+h(k')=K'} (2k'+1)l(k')f_{\mathbf{j},\mathbf{i},\mathbf{h},\mathbf{l}}(C)J^{\mathbf{j}}I^{\mathbf{i}} H^{\mathbf{h}}L^{\widetilde{\mathbf{l}}}w_\phi\nonumber\\
&=&\phi(J_1)\sum_{(\mathbf{j},\mathbf{i},\mathbf{h},\mathbf{l})\in\Lambda_{N}\atop h(k')\neq0, l(k')+h(k')=K'} h(k')f_{\mathbf{j},\mathbf{i},\mathbf{h},\mathbf{l}}(C)J^{\mathbf{j}}I^{\mathbf{i}} H^{\widetilde{\mathbf{h}}}L^{\mathbf{l}}w_\phi.
\end{eqnarray*}
Combining the above two formulae with the assumption that $\phi$ is nonsingular, we obtain
$$\sum_{(\mathbf{j},\mathbf{i},\mathbf{h},\mathbf{l})\in\Lambda_{N}\atop l(k')\neq0, l(k')+h(k')=K'} (2k'+1)l(k')f_{\mathbf{j},\mathbf{i},\mathbf{h},\mathbf{l}}(C)J^{\mathbf{j}}I^{\mathbf{i}} H^{\mathbf{h}}L^{\widetilde{\mathbf{l}}}w_\phi=0$$
and $$\sum_{(\mathbf{j},\mathbf{i},\mathbf{h},\mathbf{l})\in\Lambda_{N}\atop h(k')\neq0, l(k')+h(k')=K'} h(k')f_{\mathbf{j},\mathbf{i},\mathbf{h},\mathbf{l}}(C)J^{\mathbf{j}}I^{\mathbf{i}} H^{\widetilde{\mathbf{h}}}L^{\mathbf{l}}w_\phi=0,$$
which are absurd, proving the result.

\textbf{Subcase 2.} $N^{\prime}<N$.

In this subcase, note that $\mathbf{l}=\mathbf{0}$ for those $(\mathbf{j},\mathbf{i},\mathbf{h},\mathbf{l})$ satisfying $N^{\prime}<|(\mathbf{j},\mathbf{i},\mathbf{h},\mathbf{l})|\leq N$. Thus, we have
\begin{eqnarray}\label{case3}w&=&\sum_{|(\mathbf{j},\mathbf{i},\mathbf{h},\mathbf{l})|<N^{\prime}} f_{\mathbf{j},\mathbf{i},\mathbf{h},\mathbf{l}}(C)J^{\mathbf{j}}I^{\mathbf{i}} H^{\mathbf{h}}L^{\mathbf{l}}w_\phi+\sum_{(\mathbf{j},\mathbf{i},\mathbf{h},\mathbf{l})\in\Lambda_{N^{\prime}}\atop l(k^{\prime})=h(k^{\prime})=0 } f_{\mathbf{j},\mathbf{i},\mathbf{h},\mathbf{l}}(C)J^{\mathbf{j}}I^{\mathbf{i}} H^{\mathbf{h}}L^{\mathbf{l}}w_\phi\nonumber\\
&&+\sum_{(\mathbf{j},\mathbf{i},\mathbf{h},\mathbf{l})\in\Lambda_{N^{\prime}}\atop l(k^{\prime})=0, h(k^{\prime})\neq0 } f_{\mathbf{j},\mathbf{i},\mathbf{h},\mathbf{l}}(C)J^{\mathbf{j}}I^{\mathbf{i}} H^{\mathbf{h}}L^{\mathbf{l}}w_\phi+\sum_{(\mathbf{j},\mathbf{i},\mathbf{h},\mathbf{l})\in\Lambda_{N^{\prime}}\atop l(k^{\prime})\neq0 } f_{\mathbf{j},\mathbf{i},\mathbf{h},\mathbf{l}}(C)J^{\mathbf{j}}I^{\mathbf{i}} H^{\mathbf{h}}L^{\mathbf{l}}w_\phi\nonumber\\
&&+\sum_{N^{\prime}<|(\mathbf{j},\mathbf{i},\mathbf{h})|\leq N} f_{\mathbf{j},\mathbf{i},\mathbf{h}}(C)J^{\mathbf{j}}I^{\mathbf{i}} H^{\mathbf{h}}w_\phi.
 \end{eqnarray}
If $\mathbf{h}=\mathbf{0}$ for any $(\mathbf{j},\mathbf{i},\mathbf{h})$ with $N^{\prime}<|(\mathbf{j},\mathbf{i},\mathbf{h})|\leq N$ and  $f_{\mathbf{j},\mathbf{i},\mathbf{h}}(C)\neq0$. It follows from similar arguments as those for the Subcase 1 that the conclusion holds.
If there exists some $\mathbf{h}\neq\mathbf{0}$ for some $(\mathbf{j},\mathbf{i},\mathbf{h})$ with $N^{\prime}<|(\mathbf{j},\mathbf{i},\mathbf{h})|\leq N$ and  $f_{\mathbf{j},\mathbf{i},\mathbf{h}}(C)\neq0$.
Denote
$$N''={\rm max}\{|(\mathbf{j},\mathbf{i},\mathbf{h})|\,\big|\,N^{\prime}<|(\mathbf{j},\mathbf{i},\mathbf{h})|\leq N, \,
\mathbf{h}\neq\mathbf{0}, \,f_{\mathbf{j},\mathbf{i},\mathbf{h}}(C)\ne0\}.$$
Also, set
\begin{eqnarray*}
&\Lambda_{N''}=\{(\mathbf{j},\mathbf{i},\mathbf{h})\in P^3 \big|  |(\mathbf{j},\mathbf{i},\mathbf{h})|=N'', f_{\mathbf{j},\mathbf{i},\mathbf{h}}(C)\neq0\},\\
&k'':={\rm min}\{m\in\Z_+\mid h(m)\neq0 \,\, {\rm with} \,\,  |(\mathbf{j},\mathbf{i},\mathbf{h})|=N''\,\,{\rm and}\,\, f_{\mathbf{j},\mathbf{i},\mathbf{h}}(C)\neq0\}
\end{eqnarray*}
and $$K'':={\rm max\,}\{h(k'')\mid (\mathbf{j},\mathbf{i},\mathbf{h})\in\Lambda_{N''}\}.$$
Also, note that $N''\leq N$ and $\Lambda_{N''}$ is a nonempty set. If $N''= N$, we write $w$ as
\begin{eqnarray*}\label{case5}w&=& \sum_{(\mathbf{j},\mathbf{i},\mathbf{h})\in\Lambda_{N}\atop h(k'')\neq0 } f_{\mathbf{j},\mathbf{i},\mathbf{h}}(C)J^{\mathbf{j}}I^{\mathbf{i}} H^{\mathbf{h}}w_\phi+\sum_{(\mathbf{j},\mathbf{i},\mathbf{h})\in\Lambda_{N}\atop h(k'')=0 } f_{\mathbf{j},\mathbf{i},\mathbf{h}}(C)J^{\mathbf{j}}I^{\mathbf{i}} H^{\mathbf{h}}w_\phi\nonumber\\
&&+\sum_{|(\mathbf{j},\mathbf{i},\mathbf{h},\mathbf{l})|<N} f_{\mathbf{j},\mathbf{i},\mathbf{h},\mathbf{l}}(C)J^{\mathbf{j}}I^{\mathbf{i}} H^{\mathbf{h}}L^{\mathbf{l}}w_\phi.
\end{eqnarray*}
If $N''< N$, then $\mathbf{h}=\mathbf{0}$ for those $(\mathbf{j},\mathbf{i},\mathbf{h})$ satisfying $N''<|(\mathbf{j},\mathbf{i},\mathbf{h})|\leq N$. Now we rewrite $w$ as
\begin{eqnarray*}\label{case5}w&=& \sum_{(\mathbf{j},\mathbf{i},\mathbf{h})\in\Lambda_{N''}\atop h(k'')\neq0 } f_{\mathbf{j},\mathbf{i},\mathbf{h}}(C)J^{\mathbf{j}}I^{\mathbf{i}} H^{\mathbf{h}}w_\phi+\sum_{(\mathbf{j},\mathbf{i},\mathbf{h})\in\Lambda_{N''}\atop h(k'')=0 } f_{\mathbf{j},\mathbf{i},\mathbf{h}}(C)J^{\mathbf{j}}I^{\mathbf{i}} H^{\mathbf{h}}w_\phi\nonumber\\
&&+\sum_{|(\mathbf{j},\mathbf{i},\mathbf{h},\mathbf{l})|<N''} f_{\mathbf{j},\mathbf{i},\mathbf{h},\mathbf{l}}(C)J^{\mathbf{j}}I^{\mathbf{i}} H^{\mathbf{h}}L^{\mathbf{l}}w_\phi+\sum_{N''<|(\mathbf{j},\mathbf{i})|\leq N} f_{\mathbf{j},\mathbf{i}}(C)J^{\mathbf{j}}I^{\mathbf{i}}w_\phi.
\end{eqnarray*}
In either case, using the same arguments as those in Subcase 1, we obtain
\begin{equation*}\label{case116}
(I_{k''+1}-\phi(I_{k''+1}))w=w_3-\phi(I_1)\sum_{(\mathbf{j},\mathbf{i},\mathbf{h})\in\Lambda_{N''}\atop h(k'')=K''} f_{\mathbf{j},\mathbf{i},\mathbf{h}}(C)h(k'')J^{\mathbf{j}}I^{\mathbf{i}} H^{\widetilde{\mathbf{h}}}w_\phi,
\end{equation*}
where $w_3=w_{3}'+w_{3}''$ with ${\rm max\,}{\rm deg\,}(w_{3}')<N''-k''$ and ${\rm max\,}_{L_{-k''}, H_{-k''}}(w_{3}'')<K''-1$. While $\widetilde{\mathbf{h}}$  satisfies $\tilde{h}(m)=h(m)$ if $m\neq k''$ and $\widetilde{h}(k'')=h(k'')-1$. It follows from $\phi(I_1)\neq0$ that $(I_{k''+1}-\phi(I_{k''+1}))w\neq0$.

\textbf{Case 2.} $\mathbf{l}=\mathbf{0}$ for any $(\mathbf{j},\mathbf{i},\mathbf{h},\mathbf{l})$ with $f_{\mathbf{j},\mathbf{i},\mathbf{h},\mathbf{l}}(C)\neq0$.

In this case, we simply write $(\mathbf{j},\mathbf{i},\mathbf{h},\mathbf{0})$ as $(\mathbf{j},\mathbf{i},\mathbf{h})$, that is,
\begin{equation*}\label{def2.1}
w=\sum_{(\mathbf{j},\mathbf{i},\mathbf{h})\in P^{3}} f_{\mathbf{j},\mathbf{i},\mathbf{h}}(C)J^{\mathbf{j}}I^{\mathbf{i}} H^{\mathbf{h}}w_\phi.
\end{equation*}
Set
$$
k:=\min\{m\in\Z_+\mid h(m)\neq0\,\, {\rm or} \,\,i(m)\neq0 \,\,{\rm or} \,\,j(m)\neq0\ {\rm for\ some}\ (\mathbf{j},\mathbf{i},\mathbf{h})\in\Lambda_{N}\}
$$
and $$K:={\rm max\,}\{h(k)\mid (\mathbf{j},\mathbf{i},\mathbf{h})\in\Lambda_{N}\}.$$

\textbf{Subcase 1.}  There exists some $(\mathbf{j},\mathbf{i},\mathbf{h})\in\Lambda_{N}$ with $h(k)\neq0$.

In this situation, by a similar discussion as that in Subcase $1$, we can prove that
\begin{equation*}
(I_{k+1}-\phi(I_{k+1}))w=w_4-\phi(I_1)\sum_{(\mathbf{j},\mathbf{i},\mathbf{h},\mathbf{l})\in\Lambda_{N}\atop h(k)=K} h(k)f_{\mathbf{j},\mathbf{i},\mathbf{h},\mathbf{l}}(C)J^{\mathbf{j}}I^{\mathbf{i}} H^{\widetilde{\mathbf{h}}}L^{\mathbf{l}}w_\phi,
\end{equation*}
where  $w_4=w_{4}'+w_{4}''$ with ${\rm max\,}{\rm deg\,}(w_{4}')<N-k$ and ${\rm max\,}_{L_{-k}, H_{-k}}(w_{4}'')<K-1$. While $\widetilde{\mathbf{h}}$  satisfies that $\tilde{h}(m)=h(m)$ if $m\neq k$ and $\widetilde{h}(k)=h(k)-1$. It follows that from $\phi(I_1)\neq0$ that $(I_{k+1}-\phi(I_{k+1}))w\neq0$.

\textbf{Subcase 2.}  There exists some $(\mathbf{j},\mathbf{i},\mathbf{h})\in\Lambda_{N}$ with $i(k)\neq0$, and $h(k)=0$ for any $(\mathbf{j},\mathbf{i},\mathbf{h})\in\Lambda_{N}$.

We have
\begin{eqnarray*}\label{case223}\big(L_{k+1}-\phi(L_{k+1})\big)w&=&\sum_{(\mathbf{j},\mathbf{i},\mathbf{h})\not\in\Lambda_{N}} f_{\mathbf{j},\mathbf{i},\mathbf{h}}(C)[L_{k+1},J^{\mathbf{j}}I^{\mathbf{i}} H^{\mathbf{h}}]w_\phi\nonumber\\
&&+\sum_{(\mathbf{j},\mathbf{i},\mathbf{h})\in\Lambda_{N}\atop i(k)=j(k)=0} f_{\mathbf{j},\mathbf{i},\mathbf{h}}(C)[L_{k+1},J^{\mathbf{j}}I^{\mathbf{i}} H^{\mathbf{h}}]w_\phi\nonumber\\
&&+\sum_{(\mathbf{j},\mathbf{i},\mathbf{h})\in\Lambda_{N}\atop i(k)=0, j(k)\neq0} f_{\mathbf{j},\mathbf{i},\mathbf{h}}(C)[L_{k+1},J^{\mathbf{j}}I^{\mathbf{i}} H^{\mathbf{h}}]w_\phi\nonumber\\
&&+\sum_{(\mathbf{j},\mathbf{i},\mathbf{h})\in\Lambda_{N}\atop i(k)\neq0 } f_{\mathbf{j},\mathbf{i},\mathbf{h}}(C)[L_{k+1},J^{\mathbf{j}}I^{\mathbf{i}} H^{\mathbf{h}}]w_\phi.
 \end{eqnarray*}
By using Lemma  \ref{2llfffd2} (1) to the first summand, we know that its degree is strictly smaller than $N-k$. For the second summand, note that $j(s)=i(s)=h(s)=0$ for $0\leq s\leq k$, we have
$$[L_{k+1}, J^{\mathbf{j}}I^{\mathbf{i}} H^{\mathbf{h}}]=[L_{k+1}, J^{\mathbf{j}}]I^{\mathbf{i}} H^{\mathbf{h}}+J^{\mathbf{j}}[L_{k+1}, I^{\mathbf{i}}]H^{\mathbf{h}}+J^{\mathbf{j}}I^{\mathbf{i}}[L_{k+1},   H^{\mathbf{h}}]\in U(\mathcal{B}^{-}).$$
Thus, its degree is also strictly smaller than $N-k$. Now using  Lemma  \ref{2llfffd2} (iii) to the third summand and last summand, respectively, we know that it is of the form
\begin{eqnarray*}
&&v_2-\phi(J_1)\sum_{(\mathbf{j},\mathbf{i},\mathbf{h})\in\Lambda_{N}\atop i(k)=0, j(k)\neq0} (2k+1)j(k)f_{\mathbf{j},\mathbf{i},\mathbf{h}}(C)J^{\widetilde{\mathbf{j}}}I^{\mathbf{i}} H^{\mathbf{h}}w_\phi\nonumber\\
&&-\phi(I_1)\sum_{(\mathbf{j},\mathbf{i},\mathbf{h})\in\Lambda_{N}\atop i(k)\neq0} (2k+1)i(k)f_{\mathbf{j},\mathbf{i},\mathbf{h}}(C)J^{\mathbf{j}}I^{\widetilde{\mathbf{i}}} H^{\mathbf{h}}w_\phi\nonumber\\
&&-\phi(J_1)\sum_{(\mathbf{j},\mathbf{i},\mathbf{h})\in\Lambda_{N}\atop i(k)j(k)\neq0} (2k+1)j(k)f_{\mathbf{j},\mathbf{i},\mathbf{h}}(C)J^{\widetilde{\mathbf{j}}}I^{\mathbf{i}} H^{\mathbf{h}}\nonumber\\
&=&v_2-\phi(I_1)\sum_{(\mathbf{j},\mathbf{i},\mathbf{h})\in\Lambda_{N}\atop i(k)\neq0} (2k+1)i(k)f_{\mathbf{j},\mathbf{i},\mathbf{h},\mathbf{l}}(C)J^{\mathbf{j}}I^{\widetilde{\mathbf{i}}} H^{\mathbf{h}}w_\phi\nonumber\\
&&-\phi(J_1)\sum_{(\mathbf{j},\mathbf{i},\mathbf{h})\in\Lambda_{N}\atop j(k)\neq0} (2k+1)j(k)f_{\mathbf{j},\mathbf{i},\mathbf{h}}(C)J^{\widetilde{\mathbf{j}}}I^{\mathbf{i}} H^{\mathbf{h}}w_\phi,
\end{eqnarray*}
where ${\rm max\,}{\rm deg\,}(v_2)<N-k$. While $\widetilde{\mathbf{i}}$ and $\widetilde{\mathbf{j}}$  satisfy that $\widetilde{i}(m)=i(m), \widetilde{j}(m)=j(m)$ if $m\neq k$ and $\widetilde{i}(k)=i(k)-1, \widetilde{j}(k)=j(k)-1$. Putting our observation together gives
\begin{eqnarray}\label{case1142}\!\!\!\!\!\!
(L_{k+1}-\phi(L_{k+1}))w&\!\!\!=\!\!\!&w_5-\phi(I_1)\sum_{(\mathbf{j},\mathbf{i},\mathbf{h})\in\Lambda_{N}\atop i(k)\neq0} (2k+1)i(k)f_{\mathbf{j},\mathbf{i},\mathbf{h}}(C)J^{\mathbf{j}}I^{\widetilde{\mathbf{i}}} H^{\mathbf{h}}w_\phi\nonumber\\
&\!\!\!\!\!\!\!\!\!\!\!\!\!\!\!\!\!\!\!\!\!\!\!\!&
-\phi(J_1)\sum_{(\mathbf{j},\mathbf{i},\mathbf{h})\in\Lambda_{N}\atop j(k)\neq0} (2k+1)j(k)f_{\mathbf{j},\mathbf{i},\mathbf{h}}(C)J^{\widetilde{\mathbf{j}}}I^{\mathbf{i}} H^{\mathbf{h}}w_\phi.
\end{eqnarray}
Similarly, we have
\begin{eqnarray}\label{case1152}\!\!\!\!\!\!
(H_{k+1}-\phi(H_{k+1}))w&\!\!\!=\!\!\!&w_6+\phi(I_1)\sum_{(\mathbf{j},\mathbf{i},\mathbf{h})\in\Lambda_{N}\atop i(k)\neq0} i(k)f_{\mathbf{j},\mathbf{i},\mathbf{h}}(C)J^{\mathbf{j}}I^{\widetilde{\mathbf{i}}} H^{\mathbf{h}}w_\phi\nonumber\\
&\!\!\!\!\!\!\!\!\!\!\!\!\!\!\!\!\!\!\!\!\!\!\!\!&
-\phi(J_1)\sum_{(\mathbf{j},\mathbf{i},\mathbf{h})\in\Lambda_{N}\atop j(k)\neq0} j(k)f_{\mathbf{j},\mathbf{i},\mathbf{h}}(C)J^{\widetilde{\mathbf{j}}}I^{\mathbf{i}} H^{\mathbf{h}}w_\phi.
\end{eqnarray}
Note that ${\rm max\,}{\rm deg\,}(w_s)<N-k$ for $s=5, 6$. Also, suppose on the contrary that for any $E_n\in\{L_n, H_n, I_n, J_n\mid n\in\N\}$, we have $\big(E_n-\phi(E_n)\big)w=0$. In particular, $$(L_{k+1}-\phi(L_{k+1}))w=(H_{k+1}-\phi(H_{k+1}))w=0.$$ Now \eqref{case1142} and \eqref{case1152} force
\begin{equation*}
\phi(I_1)\sum_{(\mathbf{j},\mathbf{i},\mathbf{h})\in\Lambda_{N}\atop i(k)\neq0} i(k)f_{\mathbf{j},\mathbf{i},\mathbf{h}}(C)J^{\mathbf{j}}I^{\widetilde{\mathbf{i}}} H^{\mathbf{h}}w_\phi\nonumber\\
=-\phi(J_1)\sum_{(\mathbf{j},\mathbf{i},\mathbf{h})\in\Lambda_{N}\atop j(k)\neq0} j(k)f_{\mathbf{j},\mathbf{i},\mathbf{h}}(C)J^{\widetilde{\mathbf{j}}}I^{\mathbf{i}} H^{\mathbf{h}}w_\phi
\end{equation*}
and
\begin{equation*}
\phi(I_1)\sum_{(\mathbf{j},\mathbf{i},\mathbf{h})\in\Lambda_{N}\atop i(k)\neq0} i(k)f_{\mathbf{j},\mathbf{i},\mathbf{h}}(C)J^{\mathbf{j}}I^{\widetilde{\mathbf{i}}} H^{\mathbf{h}}w_\phi=\phi(J_1)\sum_{(\mathbf{j},\mathbf{i},\mathbf{h})\in\Lambda_{N}\atop j(k)\neq0} j(k)f_{\mathbf{j},\mathbf{i},\mathbf{h}}(C)J^{\widetilde{\mathbf{j}}}I^{\mathbf{i}} H^{\mathbf{h}}w_\phi.
\end{equation*}
Using the above two formulae along with the assumption that  $ \phi$ is nonsingular, we have
$$\sum_{(\mathbf{j},\mathbf{i},\mathbf{h})\in\Lambda_{N}\atop i(k)\neq0} i(k)f_{\mathbf{j},\mathbf{i},\mathbf{h}}(C)J^{\mathbf{j}}I^{\widetilde{\mathbf{i}}} H^{\mathbf{h}}w_\phi=0,$$
which is impossible, proving the result.

\textbf{Subcase 3.} There exists some $(\mathbf{j},\mathbf{i},\mathbf{h})\in\Lambda_{N}$ with $j(k)\neq0$, and $h(k)=i(k)=0$ for any $(\mathbf{j},\mathbf{i},\mathbf{h})\in\Lambda_{N}$.

By a similar discussion as that in Subcase 2, we can show that the conclusion holds. We complete the proof.
\end{proof}
\begin{prop}\label{vec2}
Suppose $\phi$ is nonsingular, $\xi\in\C^3$, and $\overline{w}_{\phi}\in \widetilde{L}_{\phi,\xi}$. Then any  Whittaker vector in $ \widetilde{L}_{\phi,\xi}$ is of the form $c\overline{w}_{\phi}$ for some $c\in\C$.
\end{prop}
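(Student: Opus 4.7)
The plan is to reduce the proposition to Proposition \ref{vec1} by exploiting the quotient construction \eqref{uniw}. Since $C_1, C_2, C_3$ are central in $U(\widetilde{\mathcal{G}})$, the subspace $K:=\sum_{s=1}^3(C_s-\xi_s1)\widetilde{M}(\phi)$ is a proper $\widetilde{\mathcal{G}}$-submodule of $\widetilde{M}(\phi)$, and $\widetilde{L}_{\phi,\xi}=\widetilde{M}(\phi)/K$. From the freeness of $\widetilde{M}(\phi)$ over $\C[C_1,C_2,C_3]\otimes U(\widetilde{\mathcal{B}}^{-})$ expressed by the basis in \eqref{def2.11}, one checks that $\widetilde{L}_{\phi,\xi}$ is free over $U(\widetilde{\mathcal{B}}^{-})$ on the class $\overline{w}_\phi$, with $C_s$ acting as the scalar $\xi_s$. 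Hence every element has a unique expansion
\begin{equation*}
\bar v=\sum_{(\mathbf{j},\mathbf{i},\mathbf{h},\mathbf{l})\in P^4} f_{\mathbf{j},\mathbf{i},\mathbf{h},\mathbf{l}}\, J^{\mathbf{j}}I^{\mathbf{i}}H^{\mathbf{h}}L^{\mathbf{l}}\overline{w}_\phi,\qquad f_{\mathbf{j},\mathbf{i},\mathbf{h},\mathbf{l}}\in\C.
\end{equation*}

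Now I take a nonzero Whittaker vector $\bar v\in\widetilde{L}_{\phi,\xi}$. By Lemma \ref{2llfff} it is automatically of type $\phi$. I would then transplant the argument of Proposition \ref{vec1} word for word, with two trivial substitutions: the polynomial coefficients $f_{\mathbf{j},\mathbf{i},\mathbf{h},\mathbf{l}}(C)\in\C[C_1,C_2,C_3]$ are replaced by scalar coefficients $f_{\mathbf{j},\mathbf{i},\mathbf{h},\mathbf{l}}\in\C$, and every equality between vectors is read modulo $K$. The commutator computations in Lemmas \ref{2llfffd} and \ref{2llfffd2} do not involve $C_1, C_2, C_3$ at all, so every intermediate identity used in the Case 1 / Case 2 split of Proposition \ref{vec1} remains valid verbatim in $\widetilde{L}_{\phi,\xi}$. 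The nonsingularity of $\phi$ is then used, exactly as before, to conclude that if some $f_{\mathbf{j},\mathbf{i},\mathbf{h},\mathbf{l}}\ne 0$ with $(\mathbf{j},\mathbf{i},\mathbf{h},\mathbf{l})\ne(\mathbf{0},\mathbf{0},\mathbf{0},\mathbf{0})$, then an appropriate choice of $E_n\in\{L_n,H_n,I_n,J_n\mid n\in\N\}$ (namely the one picked out by the integers $N, k', K', k'', K'', k, K$ associated to $\bar v$) produces $(E_n-\phi(E_n))\bar v\ne 0$, contradicting that $\bar v$ is a Whittaker vector.

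Consequently, $f_{\mathbf{j},\mathbf{i},\mathbf{h},\mathbf{l}}=0$ whenever $(\mathbf{j},\mathbf{i},\mathbf{h},\mathbf{l})\ne(\mathbf{0},\mathbf{0},\mathbf{0},\mathbf{0})$, so $\bar v=c\,\overline{w}_\phi$ with $c=f_{\mathbf{0},\mathbf{0},\mathbf{0},\mathbf{0}}\in\C$, as required. The only real work in this argument is already done in Proposition \ref{vec1}; the main (minor) obstacle here is the linear–algebraic bookkeeping confirming that the PBW basis of $\widetilde{L}_{\phi,\xi}$ is genuinely free over $\C$ on the monomials $J^{\mathbf{j}}I^{\mathbf{i}}H^{\mathbf{h}}L^{\mathbf{l}}\overline{w}_\phi$, so that the nonvanishing conclusions obtained at each stage of the case analysis—after reduction modulo $K$—really give nonzero vectors in $\widetilde{L}_{\phi,\xi}$. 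Once that is in place, the proof is essentially a translation of the nonsingular-case argument for $\widetilde{M}(\phi)$ into the quotient.
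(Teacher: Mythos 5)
Your proposal is correct and is essentially identical to the paper's own proof: the authors also observe that $\{J^{\mathbf{j}}I^{\mathbf{i}}H^{\mathbf{h}}L^{\mathbf{l}}\overline{w}_\phi\}$ is a basis of $\widetilde{L}_{\phi,\xi}$ and then rerun the Proposition \ref{vec1} case analysis with scalars replacing the polynomial coefficients $f_{\mathbf{j},\mathbf{i},\mathbf{h},\mathbf{l}}(C)$.
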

\begin{proof}
It is easy to see that the set
$\{J^{\mathbf{j}}I^{\mathbf{i}} H^{\mathbf{h}}L^{\mathbf{l}}\overline{w}_\phi\mid (\mathbf{j},\mathbf{i},\mathbf{h},\mathbf{l})\in P^{4}\}$
forms a basis of $\widetilde{L}_{\phi,\xi}$. Then we can use the same arguments as those in Proposition  \ref{vec1} to complete the proof by replacing the polynomials $f_{\mathbf{j},\mathbf{i},\mathbf{h},\mathbf{l}}(C)$ with scalars  $f_{\mathbf{j},\mathbf{i},\mathbf{h},\mathbf{l}}$ whenever necessary.
\end{proof}

\section{Simplicity and classification of generic Whittaker modules}
\subsection{Some general properties of Whittaker modules}
In this subsection, we present some general properties of Whittaker modules for later use. Let $\phi:\mathcal{G}^{+}=\widetilde{\mathcal{G}}^{+}\longrightarrow\C$
be a Lie algebra homomorphism, and $V$ be a Whittaker $\mathcal{G}$-module (resp. $\widetilde{\mathcal{G}}$-module) of type $\phi$ with a cyclic Whittaker vector $w_{\phi}$. We may regard $V$ as a $\mathcal{G}^{+}=\widetilde{\mathcal{G}}^{+}$-module by restriction. Define a modified action of $\mathcal{G}^{+}=\widetilde{\mathcal{G}}^{+}$ on $V$, denoted as ``$\star$",  by setting
$$x\star v=xv-\phi(x)v,\,\forall\,x\in\mathcal{G}^{+}=\widetilde{\mathcal{G}}^{+},\,v\in V.$$ Thus, if we regard a Whittaker module $V$ as a $\mathcal{G}^{+}=\widetilde{\mathcal{G}}^{+}$-module under the star action, then for $v=uw_{\phi}\in V$, we  have
$$x\star v=[x, u]w_{\phi},\,\forall\,x\in\mathcal{G}^{+}=\widetilde{\mathcal{G}}^{+}.$$
\begin{lemm}\label{4.1}
Keep notations as above, then $E_n$ acts locally nilpotent on $V$ under the star action ``$\star$" for any $E_n\in\{L_n, H_n, I_n, J_n\}$ and $n\in\N$.
\end{lemm}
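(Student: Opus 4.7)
The plan is to exploit the identity $E_n^k\star(uw_\phi)=(\mathrm{ad}\,E_n)^k(u)\cdot w_\phi$ obtained by iterating Remark~\ref{rem}. Any $v\in V$ is of the form $uw_\phi$ with $u\in U(\widetilde{\mathcal{B}}^-)$, so by linearity and since central factors commute with $E_n$ it suffices to treat a single PBW monomial $u=J^{\mathbf{j}}I^{\mathbf{i}}H^{\mathbf{h}}L^{\mathbf{l}}$ and exhibit an integer $k$ with $(\mathrm{ad}\,E_n)^k(u)\cdot w_\phi=0$.

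The principal technical tool is the $\mathrm{maxdeg}$-estimate. By Lemma~\ref{2llfffd}(i), together with an entirely parallel distribution computation for $E_n\in\{L_n,H_n\}$ on the full four-block monomial (carried out exactly as in the proof of Lemma~\ref{2llfffd}(i), invoking the basic brackets $[L_n,L_{-k}]$, $[L_n,H_{-k}]$, $[H_n,L_{-k}]$, $[H_n,H_{-k}]$), one obtains
\[
\mathrm{maxdeg}\!\bigl([E_n,u']\cdot w_\phi\bigr)\le \mathrm{maxdeg}(u'\cdot w_\phi)-\delta(E_n),
\]
with $\delta(E_n)=n-1$ for $E_n\in\{H_n,I_n,J_n\}$ and $\delta(E_n)=n-2$ for $E_n=L_n$. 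Consequently, for $E_n\in\{H_n,I_n,J_n\}$ with $n\ge 2$ and for $E_n=L_n$ with $n\ge 3$, each iterate strictly lowers $\mathrm{maxdeg}$, and $(\mathrm{ad}\,E_n)^k(u)\cdot w_\phi=0$ as soon as $k\ge \mathrm{maxdeg}(uw_\phi)+1$.

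The main obstacle consists of the five ``generator'' cases $E_n\in\{L_1,L_2,H_1,I_1,J_1\}$, where the above estimate is not strict. For these I would refine the invariant to the lexicographic pair
\[
\bigl(\mathrm{maxdeg}(v),\ l(0)+h(0)+i(0)+j(0)\bigr),
\]
with the secondary entry counting zero-indexed letters appearing in top-$\mathrm{maxdeg}$ PBW monomials of $v$. The key explicit input, essentially the content of parts~(iii) of Lemmas~\ref{2llfffd} and~\ref{2llfffd2}, is that each commutator $[E_n,X_0]$ with $X_0\in\{L_0,H_0,I_0,J_0\}$ is either zero, kills $w_\phi$ outright (as with $[L_2,I_0]=-2I_2$), or produces an index-$1$ generator of $\widetilde{\mathcal{G}}^+$ (or the index-$2$ generator $L_2$ in the case $[L_2,L_0]=-2L_2$) that acts on $w_\phi$ as a scalar $\phi(L_1),\phi(L_2),\phi(H_1),\phi(I_1)$, or $\phi(J_1)$. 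Commuting this scalar-producing generator to the right past the remaining PBW factors, by the strict-drop estimate of the previous paragraph (now applied to this fresh $\widetilde{\mathcal{G}}^+$-element), introduces only strictly lower $\mathrm{maxdeg}$ contributions, so the net effect on the top-$\mathrm{maxdeg}$ stratum is to strip off one zero-indexed letter, strictly reducing the secondary invariant while leaving $\mathrm{maxdeg}$ unchanged. Strong induction on the lex pair then yields local nilpotency.
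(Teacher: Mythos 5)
Your argument takes a genuinely different route from the paper's, and it is worth comparing the two. The paper splits the four families of generators along the Lie-algebra structure: for $I_n$ and $J_n$ it exploits that $\mathcal I\oplus\mathcal J$ is an abelian ideal, so that $(\ad I_n)^{\Delta(\mathbf h)+\Delta(\mathbf l)+1}$ literally annihilates each PBW monomial $J^{\mathbf j}I^{\mathbf i}H^{\mathbf h}L^{\mathbf l}$ inside $U(\widetilde{\mathcal G})$ — a purely algebraic nilpotency that works uniformly for all $n\ge 1$, including $I_1,J_1$; for $H_n,L_n$ it combines the $L_0$-weight shift ($(\ad H_n)^p$ or $(\ad L_n)^p$ lands in the weight space $-|(\mathbf j,\mathbf i,\mathbf h,\mathbf l)|+pn$) with the observation that PBW reordering never increases the number of $J,I,H,L$ factors, so for large $p$ every surviving term has a trailing raising generator of index $\ge 3$ that kills $w_\phi$. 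You instead split along the size of $n$: a $\mathrm{maxdeg}$ strict-drop for the non-generator indices, and a lexicographic refinement $(\mathrm{maxdeg},\,\text{zero-index count})$ for the five generators. Your filtration idea does work and is an instructive alternative, but it has to carry the cases $I_1,J_1$ through the lex machinery that the paper's abelian-ideal observation dispatches in one line, and it requires rather more bookkeeping than the paper's weight-plus-length-bound argument for $H_n,L_n$.

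There is one place where your justification, as written, does not hold up and needs to be repaired. When a zero-indexed factor $X_0$ produces a fresh generator $G\in\{L_1,L_2,H_1,I_1,J_1\}$, you assert that pushing $G$ to the right ``by the strict-drop estimate of the previous paragraph (now applied to this fresh $\widetilde{\mathcal G}^+$-element), introduces only strictly lower $\mathrm{maxdeg}$ contributions.'' That estimate gives $\delta(E_n)\ge1$ only for $H_n,I_n,J_n$ with $n\ge2$ and $L_n$ with $n\ge3$; for the fresh element $G$, which is itself one of the five generators, it gives $\delta=0$ (or even $\delta=-1$ for $L_1$), so it does not yield a strict drop, and indeed the push-past does produce terms of the \emph{same} $\mathrm{maxdeg}$: each time $G$ crosses another zero-indexed factor it regenerates a generator ($[L_1,L_0]=-L_1$, $[I_1,H_0]=-I_1$, $[L_2,L_0]=-2L_2$, etc.), and the residue is a same-$\mathrm{maxdeg}$ monomial with one further zero-index letter removed. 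These terms are still strictly lower in the lexicographic order, so your conclusion (strip one zero-index letter off the top stratum, lower the secondary invariant) is correct, but the reason is a separate finite recursion on the zero-index factors of the tail, not the earlier strict-drop lemma. You should replace the appeal to that lemma by an explicit argument that for $G\in\{L_1,L_2,H_1,I_1,J_1\}$ and a PBW word $Y\in U(\widetilde{\mathcal B}^-)$, the commutator $[G,Y]w_\phi$ decomposes as (terms of strictly smaller $\mathrm{maxdeg}$) plus (scalar multiples of the words obtained from $Y$ by deleting one zero-index letter), which is what the lex induction actually needs.
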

\begin{proof} We only show the assertion for $\widetilde{\mathcal{G}}$-module $V$. For the situation of $\mathcal{G}$-module $V$, the assertion can be proved similarly.

Take any basis element $u=C^{\mathbf{\alpha}}J^{\mathbf{j}}I^{\mathbf{i}} H^{\mathbf{h}}L^{\mathbf{l}}$ of $U(\mathcal{\widetilde{B}}^{-})$, where $C^{\mathbf{\alpha}}=C_1^{\alpha_1}C_2^{\alpha_2}C_3^{\alpha_3}$, and $\mathbf{\alpha}=(\alpha_1,\alpha_2,\alpha_3)\in\Z_{+}^{3}$. Direct calculation yields that
\begin{eqnarray*}\label{4.11}
(\ad J_n)^{\Delta(\mathbf{h})+\Delta(\mathbf{l})+1}uw_{\phi}=0\quad {\rm and}\quad (\ad I_n)^{\Delta(\mathbf{h})+\Delta(\mathbf{l})+1}uw_{\phi}=0
\quad {\rm for\ any}\ n\in\N.
\end{eqnarray*}
To show that $(\ad H_n)^{p}uw_{\phi}=0$ for $p$ sufficiently large, we first note that $$(\ad H_n)^{p}(J^{\mathbf{j}}I^{\mathbf{i}} H^{\mathbf{h}}L^{\mathbf{l}})\in U(\widetilde{\mathcal{G}})_{-|(\mathbf{j},\mathbf{i},\mathbf{h},\mathbf{l})|+pn},$$ so $(\ad H_n)^{p}(J^{\mathbf{j}}I^{\mathbf{i}} H^{\mathbf{h}}L^{\mathbf{l}})$ is a linear combination of basis elements of $U(\widetilde{\mathcal{G}})$ of the form
\begin{equation}\label{4.12}
C^{t}J^{\mathbf{j_{1}}}I^{\mathbf{i_{1}}} H^{\mathbf{h_{1}}}L^{\mathbf{l_{1}}}J_{j_{w}}\cdots J_{j_{1}}I_{i_{x}}\cdots I_{i_{1}}H_{h_{y}}\cdots H_{h_{1}}L_{l_{z}}\cdots L_{l_{1}},
\end{equation}
where $$-|(\mathbf{j_{1}},\mathbf{i_{1}},\mathbf{h_{1}},\mathbf{l_{1}})|+\sum_{s=1}^{w}j_{s}+\sum_{s=1}^{x}i_{s}+\sum_{s=1}^{y}h_{s}+\sum_{s=1}^{z}l_{s}
=-|(\mathbf{j},\mathbf{i},\mathbf{h},\mathbf{l})|+pn$$
and
$$\Delta(\mathbf{j_{1}})+\Delta(\mathbf{i_{1}})+\Delta(\mathbf{h_{1}})+\Delta(\mathbf{l_{1}})+w+x+y+z\leq \Delta(\mathbf{j})+\Delta(\mathbf{i})+\Delta(\mathbf{h})+\Delta(\mathbf{l}).$$
Since $E_nw_{\phi}=0$ for $n\geq3$, it is easy to see that the element in \eqref{4.12} acts on $w_{\phi}$ trivially
when $p$ sufficiently large, so that $(\ad H_n)^{p}uw_{\phi}=0$. Similarly, $(\ad L_n)^{q}u=0$ for $q$ sufficiently large. This finishes the proof of the lemma.
\end{proof}
\begin{lemm}\label{4.2}
Let $(\mathbf{j},\mathbf{i},\mathbf{h},\mathbf{l})\in P^{4}$ and $\alpha=(\alpha_1, \alpha_2, \alpha_3)\in\Z_{+}^{3}$. Denote $|\alpha|=\alpha_1+\alpha_2+\alpha_3$. Then the following statements hold.
\begin{itemize}
\item[(i)]For any $n\in\N$, $E_n\star(C^{\alpha}J^{\mathbf{j}}I^{\mathbf{i}} H^{\mathbf{h}}L^{\mathbf{l}}w_{\phi})\in{\rm
span}_{\C}\{C^{\alpha'}J^{\mathbf{j'}}I^{\mathbf{i'}} H^{\mathbf{h'}}L^{\mathbf{l'}}w_{\phi}\,\big|\, |\alpha'|+|(\mathbf{j'},\mathbf{i'},\mathbf{h'},\mathbf{l'})|+j'(0)+i'(0)+h'(0)+l'(0)\leq |\alpha|+ |(\mathbf{j},\mathbf{i},\mathbf{h},\mathbf{l})|+j(0)+i(0)+h(0)+l(0)\}$.
\item[(ii)]If $n>|(\mathbf{j},\mathbf{i},\mathbf{h},\mathbf{l})|+2$, then $E_n\star(J^{\mathbf{j}}I^{\mathbf{i}} H^{\mathbf{h}}L^{\mathbf{l}}w_{\phi})=0$.
\end{itemize}
\end{lemm}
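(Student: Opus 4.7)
Both parts reduce immediately to the case $\alpha=(0,0,0)$: since $C_1,C_2,C_3$ are central in $U(\widetilde{\mathcal{G}})$, we have
$E_n\star(C^\alpha v)=C^\alpha(E_n\star v)$
for every $v\in\widetilde{M}(\phi)$, and the bound in (i) is stable under multiplication by $C^\alpha$. So set $u=J^{\mathbf{j}}I^{\mathbf{i}}H^{\mathbf{h}}L^{\mathbf{l}}$ and note from Remark~\ref{rem} that $E_n\star(uw_\phi)=[E_n,u]w_\phi$. I would proceed by induction on the length
$r=\Delta(\mathbf{j})+\Delta(\mathbf{i})+\Delta(\mathbf{h})+\Delta(\mathbf{l})$
of $u$. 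The base case $r=0$ is trivial since $E_n w_\phi=\phi(E_n)w_\phi$.

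For the inductive step, write $u=X_{-k}u'$ and use the Leibniz expansion
\[
E_n\star(uw_\phi)=[E_n,X_{-k}]u'w_\phi+X_{-k}\bigl(E_n\star u'w_\phi\bigr).
\]
A direct reading of the structure constants of $\widetilde{\mathcal{G}}$ shows that $[E_n,X_{-k}]$ equals a scalar multiple of $Y_{n-k}$ for some $Y\in\{L,H,I,J\}$, plus (only when $n=k$) a central term $c'C_s$. When $n\le k$, the element $Y_{n-k}\in\widetilde{\mathcal{B}}^-$ and a simple weight count, using that no bracket $[\widetilde{\mathcal{B}}^-,\widetilde{\mathcal{B}}^-]$ ever produces a central element, shows that reordering $Y_{n-k}u'$ into PBW form only decreases the quantity in (i). When $n>k$, I would rewrite
$Y_{n-k}u'w_\phi=[Y_{n-k},u']w_\phi+\phi(Y_{n-k})u'w_\phi$
and apply the inductive hypothesis (for the shorter word $u'$ and the positive-mode element $Y_{n-k}$) to the first summand. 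The central term $c'C_su'w_\phi$ and the contribution $X_{-k}(E_n\star u'w_\phi)$ are handled by the same bookkeeping together with the inductive hypothesis applied to $u'$.

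For (ii), the same induction applies with a sharper observation: the hypothesis $n>|(\mathbf{j},\mathbf{i},\mathbf{h},\mathbf{l})|+2$ forces $n>k+2$ for every index $k$ appearing in $u$, so $[E_n,X_{-k}]=cY_{n-k}$ has no central part and the resulting generator $Y_{n-k}$ satisfies $n-k\ge 3$, whence $\phi(Y_{n-k})=0$. Both terms $[Y_{n-k},u']w_\phi$ and $X_{-k}(E_n\star u'w_\phi)$ then fall under the induction, with the respective parameters $n-k>|(\mathbf{j}',\mathbf{i}',\mathbf{h}',\mathbf{l}')|+2$ and $n>|(\mathbf{j}',\mathbf{i}',\mathbf{h}',\mathbf{l}')|+2$ both still valid after removing $X_{-k}$. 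Each contribution vanishes by induction, giving $E_n\star(uw_\phi)=0$.

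\textbf{Main obstacle.} The principal nuisance is the careful tracking of the ``0-multiplicity'' summand $j(0)+i(0)+h(0)+l(0)$ in the weight in (i). It jumps by one exactly when the bracket produces a $0$-mode generator (i.e.\ when $n=k$ so that $Y_{n-k}=Y_0$), and the inequality is saved only by the matching drop of $n\ge 1$ in $|(\mathbf{j},\mathbf{i},\mathbf{h},\mathbf{l})|$; similarly, the central case $n=k\ge 1$ shifts one unit from $|(\mathbf{j},\mathbf{i},\mathbf{h},\mathbf{l})|$ into $|\alpha|$, again compensated by $n\ge 1$. Once this accounting is made explicit, the induction goes through uniformly for all four choices $E_n\in\{L_n,H_n,I_n,J_n\}$.
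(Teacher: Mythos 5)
Your proposal is correct and takes essentially the same approach as the paper. For (i), both reduce to $\alpha=0$ using that $C_1,C_2,C_3$ are central, then induct on the number of generators in the monomial by peeling off the leftmost factor and applying the Leibniz rule; your explicit attention to the reordering of $Y_{n-k}u'$ and to the $0$-mode bookkeeping correctly fills in what the paper mostly leaves implicit (by always peeling the largest-index factor in the first nonempty $J/I/H/L$-block, the paper's written-out case $\mathbf{j}\neq\mathbf{0}$ never needs reordering since the bracket $[E_n,J_{-m}]$ stays of type $J$, but for the "similar" cases it omits, a weight count like yours is exactly what is needed). For (ii), the paper instead argues directly on the full PBW expansion of $[E_n,u]$, observing that every term must carry a rightmost positive-mode factor $E_p$ with $p=n-|(\mathbf{j},\mathbf{i},\mathbf{h},\mathbf{l})|+|(\mathbf{j'},\mathbf{i'},\mathbf{h'},\mathbf{l'})|>2$; your length induction is a correct and slightly more detailed variant of the same observation.
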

\begin{proof}
(i) Since
$$E_n\star (C^{\alpha}J^{\mathbf{j}}I^{\mathbf{i}} H^{\mathbf{h}}L^{\mathbf{l}}w_{\phi})=C^{\alpha}\big(E_n\star(J^{\mathbf{j}}I^{\mathbf{i}} H^{\mathbf{h}}L^{\mathbf{l}}w_{\phi})\big),$$
it suffices to consider the case for $\alpha=(0, 0, 0)$. The statement for the case $\Delta(\mathbf{j})+\Delta(\mathbf{i})+\Delta(\mathbf{h})+\Delta(\mathbf{l})=0$ is obvious. Now we prove the assertion for $\Delta(\mathbf{j})+\Delta(\mathbf{i})+\Delta(\mathbf{h})+\Delta(\mathbf{l})>0$ by induction.

For the case $\mathbf{j}\neq\mathbf{0}$, set $m={\rm max}\{s\mid j(s)>0\}$. Then
$$J^{\mathbf{j}}I^{\mathbf{i}} H^{\mathbf{h}}L^{\mathbf{l}}=J_{-m}J^{\widetilde{\mathbf{j}}}I^{\mathbf{i}} H^{\mathbf{h}}L^{\mathbf{l}},$$
where $\widetilde{j}(m)=j(m)-1$, $\widetilde{j}(s)=j(s)$ for $s\neq m$. Therefore,
\begin{equation}\label{4.14}
E_n\star (J^{\mathbf{j}}I^{\mathbf{i}} H^{\mathbf{h}}L^{\mathbf{l}}w_{\phi})=
[E_n, J_{-m}]J^{\widetilde{\mathbf{j}}}I^{\mathbf{i}} H^{\mathbf{h}}L^{\mathbf{l}}w_{\phi}+J_{-m}[E_n, J^{\widetilde{\mathbf{j}}}I^{\mathbf{i}} H^{\mathbf{h}}L^{\mathbf{l}}]w_{\phi}.
\end{equation}
For the first term on the right hand side of \eqref{4.14}, note that $[E_n, J_{-m}]=0$ for $E_n=J_n$ or $I_n$, we need to consider the case $E_n=H_n$ or $L_n$. We only discuss the case $E_n=H_n$,  similar arguments can be applied to  $E_n=L_n$.  If $n-m\leq0$, it is obvious that
$$[H_n, J_{-m}]J^{\widetilde{\mathbf{j}}}I^{\mathbf{i}} H^{\mathbf{h}}L^{\mathbf{l}}w_{\phi}=-J^{\widetilde{\mathbf{j}}}I_{n-m}I^{\mathbf{i}} H^{\mathbf{h}}L^{\mathbf{l}}w_{\phi}$$
has the desired form. If $n-m>0$,
$$[H_n, J_{-m}]J^{\widetilde{\mathbf{j}}}I^{\mathbf{i}} H^{\mathbf{h}}L^{\mathbf{l}}w_{\phi}=I_{n-m}J^{\widetilde{\mathbf{j}}}I^{\mathbf{i}} H^{\mathbf{h}}L^{\mathbf{l}}w_{\phi}=I_{n-m}\star (J^{\widetilde{\mathbf{j}}}I^{\mathbf{i}} H^{\mathbf{h}}L^{\mathbf{l}}w_{\phi})+\phi(I_{n-m})J^{\widetilde{\mathbf{j}}}I^{\mathbf{i}} H^{\mathbf{h}}L^{\mathbf{l}}w_{\phi}.$$
By assumption, $I_{n-m}\star(J^{\widetilde{\mathbf{j}}}I^{\mathbf{i}} H^{\mathbf{h}}L^{\mathbf{l}}w_{\phi})$ and therefore $[H_n, J_{-m}]J^{\widetilde{\mathbf{j}}}I^{\mathbf{i}} H^{\mathbf{h}}L^{\mathbf{l}}w_{\phi}$ has the desired form. For the second term on the right hand side of \eqref{4.14},  by the induction hypothesis, we have
\begin{eqnarray}\label{4.15}[E_n, J^{\widetilde{\mathbf{j}}}I^{\mathbf{i}} H^{\mathbf{h}}L^{\mathbf{l}}]w_{\phi}&\in&
{\rm span}_{\C}\{C^{\alpha'}J^{\widetilde{\mathbf{j'}}}I^{\mathbf{i'}} H^{\mathbf{h'}}L^{\mathbf{l'}}w_{\phi}\big|\,|\alpha'|+ |\widetilde{\mathbf{j'}}, \mathbf{i'}, \mathbf{h'}, \mathbf{l'})|+\widetilde{j'}(0)+i'(0)\nonumber\\
&&+h'(0)+l'(0)\leq |(\widetilde{\mathbf{j}},\mathbf{i},\mathbf{h},\mathbf{l})|+\widetilde{j}(0)+i(0)+h(0)+l(0)\}.
\end{eqnarray}
Thus, $J_{-m}[E_n, J^{\widetilde{\mathbf{j}}}I^{\mathbf{i}} H^{\mathbf{h}}L^{\mathbf{l}}]w_{\phi}$
has the desired form since $-m\leq0$ and $m+|(\widetilde{\mathbf{j}},\mathbf{i},\mathbf{h},\mathbf{l})|=|(\mathbf{j},\mathbf{i},\mathbf{h},\mathbf{l})|$.

For the other cases $\mathbf{j}=\mathbf{0}, \mathbf{i}\neq\mathbf{0}$, or  $\mathbf{j}=\mathbf{i}=\mathbf{0}, \mathbf{h}\neq\mathbf{0}$, or $\mathbf{j}=\mathbf{i}=\mathbf{h}=\mathbf{0}, \mathbf{l}\neq\mathbf{0}$, the proofs are similar, we omit the details.

(ii) Note that
$$[E_n, J^{\mathbf{j}}I^{\mathbf{i}} H^{\mathbf{h}}L^{\mathbf{l}}]=\sum_{(\mathbf{j'},\mathbf{i'},\mathbf{h'},\mathbf{l'})\in P^4}f_{\mathbf{j'},\mathbf{i'},\mathbf{h'}, \mathbf{l'}}(C)J^{\mathbf{j'}}I^{\mathbf{i'}} H^{\mathbf{h'}} L^{\mathbf{l'}}E_p,$$
where $f_{\mathbf{j'},\mathbf{i'},\mathbf{h'}, \mathbf{l'}}(C)\in\C[ C_1, C_2, C_3]$   and $p=n-|(\mathbf{j},\mathbf{i},\mathbf{h},\mathbf{l})|+|(\mathbf{j'},\mathbf{i'},\mathbf{h'},\mathbf{l'})|>2$. Then the assertion follows immediately, since $E_pw_{\phi}=0$ for any $p>2$.
\end{proof}

As a direct consequence of Lemma \ref{4.14}, we have
\begin{coro}\label{4.3}
Suppose $V$ is a Whittaker $\mathcal{G}$-module (resp. $\widetilde{\mathcal{G}}$-module), and $v\in V$. Regarding $V$ as a $\mathcal{G}^{+}$-module (resp. $\widetilde{\mathcal{G}}^{+}$-module) under the star action, then $U(\mathcal{G}^{+})\star v$ (resp. $U(\widetilde{\mathcal{G}}^{+})\star v$) is a finite-dimensional $\mathcal{G}^{+}$-submodule (resp. $\widetilde{\mathcal{G}}^{+}$-submodule) of $V$.
\end{coro}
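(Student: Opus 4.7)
The plan is to exhibit $U(\widetilde{\mathcal{G}}^{+})\star v$ as a subspace of an explicitly finite-dimensional subspace of $V$; the $\mathcal{G}$-case is identical after dropping the central factors. First I would observe that, because $w_\phi$ is cyclic and $\widetilde{\mathcal{G}}^{+}$ acts on it by the scalar $\phi$ (extended to an algebra homomorphism $U(\widetilde{\mathcal{G}}^{+})\to\C$), the PBW theorem gives $V=U(\widetilde{\mathcal{G}})w_\phi=U(\widetilde{\mathcal{B}}^{-})w_\phi$, so any $v\in V$ is a finite linear combination of vectors of the form $C^{\alpha}J^{\mathbf{j}}I^{\mathbf{i}}H^{\mathbf{h}}L^{\mathbf{l}}w_\phi$. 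To each such monomial attach the size
\[
\sigma(C^{\alpha}J^{\mathbf{j}}I^{\mathbf{i}}H^{\mathbf{h}}L^{\mathbf{l}}w_\phi):=|\alpha|+|(\mathbf{j},\mathbf{i},\mathbf{h},\mathbf{l})|+j(0)+i(0)+h(0)+l(0),
\]
and let $\sigma(v)$ be the maximum of $\sigma$ over the monomials appearing in a chosen expression of $v$.

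The key input is Lemma~\ref{4.2}(i), which says precisely that $E_n\star(C^{\alpha}J^{\mathbf{j}}I^{\mathbf{i}}H^{\mathbf{h}}L^{\mathbf{l}}w_\phi)$ is a linear combination of monomials of the same shape with $\sigma$-value no larger than the input's, for every $E_n\in\{L_n,H_n,I_n,J_n\}$ and $n\in\N$. A short verification (using that $\phi:\widetilde{\mathcal{G}}^{+}\to\C$ is a Lie algebra homomorphism into the abelian $\C$) shows that $\star$ is genuinely a Lie algebra action of $\widetilde{\mathcal{G}}^{+}$ on $V$, so it extends to an action of $U(\widetilde{\mathcal{G}}^{+})$; consequently $U(\widetilde{\mathcal{G}}^{+})\star v$ is spanned by iterated products $E_{n_1}\star\cdots\star E_{n_k}\star v$. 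Iterating Lemma~\ref{4.2}(i) on each such product, the bound $\sigma\le\sigma(v)$ persists at every step, and hence $U(\widetilde{\mathcal{G}}^{+})\star v\subseteq W_{\sigma(v)}$, where $W_M\subseteq V$ is the span of all such monomials of size at most $M$.

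It remains to check the purely combinatorial claim that $W_M$ is finite-dimensional. The single constraint $\sigma\le M$ simultaneously bounds $|\alpha|$, each of $|\mathbf{j}|,|\mathbf{i}|,|\mathbf{h}|,|\mathbf{l}|$ and each of $j(0),i(0),h(0),l(0)$; there are only finitely many $\alpha\in\Z_{+}^{3}$ with $|\alpha|\le M$, and for each partition among $\mathbf{j},\mathbf{i},\mathbf{h},\mathbf{l}$ a bound on the sum of parts leaves only finitely many possibilities for the positive parts while a bound on the multiplicity of $0$ pins down the remaining zero parts, so only finitely many monomials are eligible. The one subtlety worth flagging is that in an arbitrary Whittaker $V$ the expansion of $v$ into such monomials need not be unique; this is harmless because $W_{\sigma(v)}$ is defined as the span of finitely many vectors of $V$, so linear independence plays no role. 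Lemmas~\ref{4.1} and~\ref{4.2}(ii) supply the complementary facts of local nilpotence and eventual vanishing but are not strictly needed once the $\sigma$-estimate is in hand.
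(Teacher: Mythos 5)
Your proof is correct and takes essentially the same approach as the paper's: the paper's proof of this corollary is the single sentence "The assertion follows directly from Lemma \ref{4.2}," and you have simply filled in the details — expanding $v$ in PBW monomials, iterating the $\sigma$-preserving bound from Lemma \ref{4.2}(i), and checking that the constraint $\sigma\le M$ isolates finitely many monomial types. Your observations that the non-uniqueness of the expansion in a non-universal $V$ is harmless (one only needs a finite spanning set) and that parts (ii) of Lemma \ref{4.2} and Lemma \ref{4.1} are not actually needed are both accurate.
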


\begin{proof}
The assertion follows directly from Lemma \ref{4.14}.
\end{proof}

\begin{lemm}\label{4.5}
Let $V$ be a Whittaker $\mathcal{G}$-module (resp. $\widetilde{\mathcal{G}}$-module), and let $S\subseteq V$ be a nonzero submodule.  Then there exists a nonzero Whittaker vector $w'\in S$.
\end{lemm}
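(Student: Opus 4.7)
The plan is to find a Whittaker vector inside $S$ by an Engel-type argument applied to the modified (``$\star$'') action on a finite-dimensional subspace. Pick any nonzero $v\in S$ and set $W:=U(\mathcal{G}^{+})\star v$ (resp.\ $U(\widetilde{\mathcal{G}}^{+})\star v$). Two remarks up front: first, the $\star$-action is an honest Lie algebra representation, for $\phi$ maps into the abelian algebra $\C$, whence $\phi([x,y])=0$, and a direct computation yields $[x\star,y\star]=[x,y]\star$; second, $S$ is stable under the original action, hence also under $\star$, so $W\subseteq S$. By Corollary \ref{4.3}, $W$ is finite-dimensional.

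I will argue that every operator in the image $\rho(\mathcal{G}^{+})\subset\mathfrak{gl}(W)$ of this $\star$-representation is nilpotent. Granting this, Engel's theorem produces a nonzero $w'\in W\subseteq S$ with $x\star w'=0$ for all $x\in\mathcal{G}^{+}$, equivalently $xw'=\phi(x)w'$, which is exactly the Whittaker vector property. By Lemma \ref{4.1}, each of the basis elements $L_k,H_k,I_k,J_k$ with $k\in\N$ acts nilpotently on the finite-dimensional $W$. Lemma \ref{4.2}(ii) guarantees further that $E_n\star W=0$ once $n$ exceeds a uniform bound determined by a basis of $W$, so the $\star$-action factors through a finite-dimensional quotient of the positively $\Z$-graded Lie algebra $\mathcal{G}^{+}$. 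Because the grading caps the length of any iterated bracket, this quotient, and hence $\rho(\mathcal{G}^{+})$, is a nilpotent Lie algebra.

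Since $\rho(\mathcal{G}^{+})$ is finite-dimensional and nilpotent, the standard theory over $\C$ gives a decomposition $W=\bigoplus_{\chi}W_{\chi}$ into generalized weight spaces indexed by weights $\chi:\rho(\mathcal{G}^{+})\to\C$. Each $\chi$ is a Lie algebra homomorphism into an abelian algebra, so it vanishes on $[\rho(\mathcal{G}^{+}),\rho(\mathcal{G}^{+})]$ and is therefore determined by its values on (the classes of) the generators; since every generator acts nilpotently on $W$, $\chi$ must vanish on every generator, and hence identically. Thus $W=W_{0}$, every element of $\rho(\mathcal{G}^{+})$ is nilpotent on $W$, and Engel's theorem supplies the required $w'$.

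The crux of the argument, and the reason one forms $W$ rather than working with all of $V$, is the nilpotency of $\rho(\mathcal{G}^{+})$ as an abstract Lie algebra; this rests on combining the positive grading of $\mathcal{G}^{+}$ with the finite-dimensionality granted by Lemma \ref{4.2}(ii). After that point, Lie/Engel theory finishes the proof uniformly in the two cases $\mathcal{G}$ and $\widetilde{\mathcal{G}}$.
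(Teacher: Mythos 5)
Your proof is correct and follows essentially the same strategy as the paper: take $W=U(\mathcal{G}^{+})\star v$, invoke Corollary \ref{4.3} for finite-dimensionality, use Lemma \ref{4.2}(ii) to see the $\star$-action factors through a truncated quotient $\mathcal{G}^{+}/\mathcal{G}^{+}_{\geq n}$, and then combine a classical theorem with Lemma \ref{4.1} to extract a common null vector. The only variation is the final step: the paper observes the quotient is solvable and applies Lie's theorem to produce a common eigenvector $w'$ with $E_k\star w'\in\C w'$, then uses the local nilpotency of Lemma \ref{4.1} to force the eigenvalue to vanish; you instead upgrade solvability to nilpotency via the positive grading, deduce from the generalized weight decomposition (together with Lemma \ref{4.1}) that every operator in $\rho(\mathcal{G}^{+})$ is nilpotent, and conclude by Engel. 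Both routes hinge on the identical trio of auxiliary results, so the arguments are essentially interchangeable; yours spends a few extra lines proving nilpotency of all operators so that Engel applies, while the paper's is marginally shorter because Lie's theorem lets Lemma \ref{4.1} be applied only to the single eigenvector produced.
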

\begin{proof}
Regarding $V$ as  a $\mathcal{G}^{+}$-module (resp. $\widetilde{\mathcal{G}}^{+}$-module) under the star action. Take $0\neq v\in S$. It follows from  Corollary \ref{4.3} that $U(\mathcal{G}^{+})\star v$ (resp. $U(\widetilde{\mathcal{G}}^{+})\star v$) is a finite-dimensional submodule of $S$. Since $U(\mathcal{G}^{+})\star v$ (resp. $U(\widetilde{\mathcal{G}}^{+})\star v$) is finite-dimensional, it follows from Lemma \ref{4.2}(ii) that there exists some $n\in\N$ such that $\mathcal{G}^{+}_{\geq n}=\widetilde{\mathcal{G}}^{+}_{\geq n}$ acts on $U(\mathcal{G}^{+})\star v$ (resp. $U(\widetilde{\mathcal{G}}^{+})\star v$) trivially, where $$\mathcal{G}^{+}_{\geq n}=\widetilde{\mathcal{G}}^{+}_{\geq n}:=\text{span}_{\C}\{L_k, H_k, I_k ,J_k\mid k\geq n\}.$$ Then $U(\mathcal{G}^{+})\star v$ (resp. $U(\widetilde{\mathcal{G}}^{+})\star v$) is a $\mathcal{G}^{+}/\mathcal{G}^{+}_{\geq n}=
\widetilde{\mathcal{G}}^{+}/\widetilde{\mathcal{G}}^{+}_{\geq n}$-module. Note that $\mathcal{G}^{+}/\mathcal{G}^{+}_{\geq n}=
\widetilde{\mathcal{G}}^{+}/\widetilde{\mathcal{G}}^{+}_{\geq n}$ is solvable, it follows from Lie's Theorem that there exists a nonzero vector $w'$ such that $E_k\star w'\in\C w'$ for any $k\in\N$, where $E_k\in\{L_k, H_k, I_k, J_k\}$. Consequently, the desired assertion follows from Lemma \ref{4.1}.
\end{proof}

\subsection{Generic Whittaker modules of nonsingular type}
In this subsection, we study generic Whittaker modules of nonsingular type. We first have the following result on simplicity of generic Whittaker modules of nonsingular type.
\begin{prop}\label{4.6}
Suppose the Lie algebra homomorphism $\phi:\mathcal{G}^{+}=\widetilde{\mathcal{G}}^{+}\longrightarrow\C$ is nonsingular, and $\xi\in\C^3$. Then the generic Whittaker $\mathcal{G}$-module $L_{\phi}$ (resp. $\widetilde{\mathcal{G}}$-module $\widetilde{L}_{\phi, \xi}$) is irreducible.
\end{prop}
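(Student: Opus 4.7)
The plan is to argue by contradiction, exploiting the machinery already assembled. Suppose $S$ is a nonzero proper submodule of $L_\phi$ (respectively $\widetilde{L}_{\phi,\xi}$). The three ingredients I would rely on are: (a) Lemma~\ref{4.5}, which produces a nonzero Whittaker vector $w' \in S$; (b) Lemma~\ref{2llfff}, which forces any nonzero Whittaker vector in these modules to be of type $\phi$; and (c) Propositions~\ref{vec1} and~\ref{vec2}, which precisely describe all $\phi$-Whittaker vectors when $\phi$ is nonsingular. Combining these three facts should yield a direct contradiction.

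More explicitly, pick $0 \neq w' \in S$ via Lemma~\ref{4.5}. By Lemma~\ref{2llfff}, $w'$ is automatically of type $\phi$. In the case of $L_\phi = M(\phi)$, Proposition~\ref{vec1} forces $w' \in \mathbb{C} w_\phi$, whence $w_\phi \in S$; since $w_\phi$ cyclically generates $M(\phi)$, it follows that $S = L_\phi$, contradicting properness. In the case of $\widetilde{L}_{\phi,\xi}$, Proposition~\ref{vec2} forces $w' \in \mathbb{C}\overline{w}_\phi$, where $\overline{w}_\phi$ denotes the image of $w_\phi$ modulo $\sum_{s=1}^{3}(C_s - \xi_s)\widetilde{M}(\phi)$. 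Again $\overline{w}_\phi$ is a cyclic generator, so $S = \widetilde{L}_{\phi,\xi}$, a contradiction.

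The main obstacle here is essentially absent at this point in the paper, since the heavy lifting has already been done. The delicate steps -- the local nilpotency of $\widetilde{\mathcal{G}}^{+}$ under the star action (Lemma~\ref{4.1}), the finite-dimensionality of $U(\widetilde{\mathcal{G}}^{+}) \star v$ (Lemma~\ref{4.2} and Corollary~\ref{4.3}), and the application of Lie's theorem together with local nilpotency to extract a genuine Whittaker vector from any nonzero submodule (Lemma~\ref{4.5}) -- together with the nonsingular classification of all Whittaker vectors (Propositions~\ref{vec1} and~\ref{vec2}) are exactly what is needed. The nonsingularity hypothesis enters only through the latter two propositions, which pin down the space of Whittaker vectors tightly enough that any Whittaker vector lying in a submodule must be a scalar multiple of the cyclic generator. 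Granted all of this, the proof should reduce to the short deduction sketched above.
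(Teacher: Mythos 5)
Your proposal is correct and follows exactly the paper's own argument, which cites Lemma~\ref{4.5}, Proposition~\ref{vec1}, and Proposition~\ref{vec2} and leaves the rest to the reader. The only slight redundancy is the appeal to Lemma~\ref{2llfff}: the Whittaker vector produced by Lemma~\ref{4.5} is already of type $\phi$ (since by Lemma~\ref{4.1} the star-action eigenvalues must vanish), so that step is not strictly needed, though it does no harm.
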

\begin{proof}
It follows from Lemma \ref{4.5}, Proposition \ref{vec1} and Proposition \ref{vec2}.
\end{proof}

The following result present a classification of irreducible $\mathcal{G}$-Whittaker (resp. $\widetilde{\mathcal{G}}$-Whittaker) modules of nonsingular type.

\begin{theo}\label{4.7}
Let $\phi:\mathcal{G}^{+}=\widetilde{\mathcal{G}}^{+}\longrightarrow\C$ be a Lie algebra homomorphism of nonsingular type. Then the following statements hold.
\begin{itemize}
\item[(i)] Let $S$ be a simple Whittaker $\mathcal{G}$-module of type $\phi$. Then $S\cong L_{\phi}$.
\item[(ii)] Let $\widetilde{S}$ be a simple Whittaker $\widetilde{\mathcal{G}}$-module of type $\phi$. Then $\widetilde{S}\cong \widetilde{L}_{\phi, \xi}$ for some $\xi\in\C^3$. Moreover, the set $\{L_{\phi,\eta}\mid \eta\in\C^3\}$ exhausts all pairwise non-isomorphic irreducible $\widetilde{\mathcal{G}}$-Whittaker modules of type $\phi$.
\end{itemize}
\end{theo}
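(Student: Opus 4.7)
The plan is to combine the universal property of the (universal) Whittaker modules recorded in Remark~\ref{rem for univ} with the irreducibility statement from Proposition~\ref{4.6}, together with a standard Dixmier/Quillen style argument to pin down the central characters in the extended case.

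For part (i), since $S$ is a Whittaker $\mathcal{G}$-module of type $\phi$, it contains, by Definition~\ref{D2} and the remark following it, a nonzero cyclic Whittaker vector $v$. The universal property therefore gives a surjective $\mathcal{G}$-homomorphism $\pi\colon M(\phi)=L_{\phi}\twoheadrightarrow S$ with $\pi(w_{\phi})=v$. Because $\phi$ is nonsingular, Proposition~\ref{4.6} tells us that $L_{\phi}$ is irreducible, so $\ker\pi$ is either $0$ or all of $L_{\phi}$; the latter contradicts $S\neq 0$, so $\pi$ is an isomorphism.

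For part (ii), the same argument produces a surjection $\widetilde{\pi}\colon \widetilde{M}(\phi)\twoheadrightarrow \widetilde{S}$. The new ingredient is to determine how the central generators $C_{1},C_{2},C_{3}$ act on $\widetilde{S}$. Since $\widetilde{\mathcal{G}}$ has a countable basis, $U(\widetilde{\mathcal{G}})$ is countable-dimensional over $\C$, and the cyclic simple module $\widetilde{S}$ has at most countable $\C$-dimension. By Dixmier's/Quillen's lemma, $\mathrm{End}_{\widetilde{\mathcal{G}}}(\widetilde{S})=\C$, and in particular each central element $C_{s}$ acts as a scalar $\xi_{s}\in\C$. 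Hence $\sum_{s=1}^{3}(C_{s}-\xi_{s}\mathbf{1})\widetilde{M}(\phi)\subseteq\ker\widetilde{\pi}$, so $\widetilde{\pi}$ factors through a surjection $\widetilde{L}_{\phi,\xi}\twoheadrightarrow\widetilde{S}$ with $\xi=(\xi_{1},\xi_{2},\xi_{3})$. Applying Proposition~\ref{4.6} once more, $\widetilde{L}_{\phi,\xi}$ is irreducible, forcing the map to be an isomorphism. The pairwise non-isomorphism of $\{\widetilde{L}_{\phi,\eta}\mid \eta\in\C^{3}\}$ is immediate from Corollary~\ref{iso cor}(iii).

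The only nonformal step is the assertion that the $C_{s}$ act by scalars on an arbitrary simple $\widetilde{\mathcal{G}}$-module; this is the point where one must invoke countable-dimensionality and Schur's lemma in its Dixmier/Quillen form, rather than the finite-dimensional version. Everything else reduces to the universal property of induced modules and the irreducibility already established in Proposition~\ref{4.6}, so I do not anticipate any further technical obstacle.
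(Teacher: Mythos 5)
Your proof follows essentially the same route as the paper's: part (i) combines the universal property of $M(\phi)$ (Remark~\ref{rem for univ}) with irreducibility from Proposition~\ref{4.6}, and part (ii) first shows the central elements act by scalars, factors the surjection $\widetilde{M}(\phi)\twoheadrightarrow\widetilde{S}$ through $\widetilde{L}_{\phi,\xi}$, and invokes Proposition~\ref{4.6} and Corollary~\ref{iso cor} again. The one place you are more careful than the paper is in justifying that the $C_s$ act by scalars: the paper simply cites ``Schur's Lemma,'' whereas you correctly observe that the finite-dimensional version does not apply and one needs the Dixmier/Quillen form, which is available because $U(\widetilde{\mathcal{G}})$, hence the cyclic simple module $\widetilde{S}$, has at most countable dimension over the uncountable field $\C$.
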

\begin{proof}
(i) follows from Proposition \ref{4.6} and Remark \ref{rem for univ}.

(ii) Let $w\in \widetilde{S}$ be a nonzero cyclic Whittaker vector of type $\phi$. By Schur's Lemma, the center of $U(\widetilde{\mathcal{G}})$ acts on
$\widetilde{S}$ by scalars. This implies that there exists  $\xi=(\xi_1,\xi_2, \xi_3)\in\C^3$ such that $C_sv=\xi_sv, \,\forall\,v\in\widetilde{S}, 1\leq s\leq 3$. Meanwhile from the universal property of $M(\phi)$, there exists a surjective homomorphism $\pi:M(\phi)\to S$ such that $\pi(w_\phi)=w$. Then
\begin{equation*}\pi\big(\sum_{s=1}^{3}(C_s-\xi_{s}1)M(\phi)\big)=\sum_{s=1}^{3}(C_s-\xi_{s}1)\pi(M(\phi))=\sum_{s=1}^{3}(C_s-\xi_{s}1)S=0.\end{equation*}
So,
\begin{equation*}\sum_{s=1}^{3}(C_s-\xi_{s}1)M(\phi)\subseteq {\rm ker\,}\pi\subseteq M(\phi).\end{equation*}
Since $L_{\phi, \xi}$ is simple by Proposition \ref{4.6} and ${\rm ker\,}\pi\neq M(\phi)$, we have ${\rm ker\,}\pi=\sum_{s=1}^{3}(C_s-\xi_{s}1)M(\phi)$. Hence the first statement follows. The second statement follows from Corollary \ref{iso cor}. We complete the proof.
\end{proof}

\subsection{Generic Whittaker modules of singular type}
Suppose the Lie algebra homomorphism $\phi:\mathcal{G}^{+}=\widetilde{\mathcal{G}}^{+}\longrightarrow\C$ is singular, and $\xi=(\xi_1,\xi_2, \xi_3)\in\C^3$. In this subsection, we study generic Whittaker modules of type $\phi$. We will show that all generic Whittaker modules of singular type are reducible. For that, let $$\Omega_{\phi}:=U(\mathcal{G})(\mathcal{I}\oplus \mathcal{J})L_{\phi},\,\,\Gamma_{\phi}:=U(\mathcal{G})\mathcal{I}L_{\phi},\,\,
 \Upsilon_{\phi}:=U(\mathcal{G})\mathcal{J}L_{\phi}$$
and
$$\widetilde{\Omega}_{\phi,\xi, c}:=U(\widetilde{\mathcal{G}})(\mathcal{I}\oplus \mathcal{J})\widetilde{L}_{\phi,\xi}+(H_0-c)\widetilde{L}_{\phi,\xi},\,\,\widetilde{\Gamma}_{\phi,\xi}:=U(\widetilde{\mathcal{G}})\mathcal{I}\widetilde{L}_{\phi,\xi},\,\,
 \widetilde{\Upsilon}_{\phi,\xi}:=U(\widetilde{\mathcal{G}})\mathcal{J}\widetilde{L}_{\phi,\xi}$$
where $c\in\C$.
\begin{prop}\label{vec55}
Keep notations as above. The following statements hold.
\begin{itemize}
\item[(i)] If $\phi(I_1)=\phi(J_1)=0$, then the generic Whittaker $\mathcal{G}$-module $L_{\phi}$ has a proper submodule $\Omega_{\phi}$, and the corresponding quotient  $L_{\phi}/ \Omega_{\phi}$ is simple if and only if $\phi(H_1)\neq 0$. Moreover,  the generic Whittaker $\widetilde{\mathcal{G}}$-module $\widetilde{L}_{\phi, \xi}$ has proper submodules $\widetilde{\Omega}_{\phi,\xi, c}$ for $c\in\C$. Let $\widetilde{L}_{\phi, \xi, c}:=\widetilde{L}_{\phi, \xi}/\widetilde{\Omega}_{\phi,\xi, c}$ be the corresponding quotient module. Then we have
   \begin{itemize}
   \item[(a)] If $\xi_{3}\neq0$, then   $\widetilde{L}_{\phi, \xi, c}$  is simple if and only if
   $2\xi_{3}\phi(L_2)+\phi(H_1)^2-4\xi_{2}\phi(H_1)\neq0$ or $\xi_{3}\phi(L_1)+(c-\xi_{2})\phi(H_1)\neq0$.
   \item[(b)] If $\xi_{3}=0$, then $\widetilde{L}_{\phi, \xi, c}$  is simple if and only if $\phi(H_1)\neq 0$.
   \end{itemize}
\item[(ii)] If $ \phi(I_1)=0, \phi(J_1)\neq0$, then the generic Whittaker $\mathcal{G}$-module $L_{\phi}$ (resp. $\widetilde{\mathcal{G}}$-module $\widetilde{L}_{\phi, \xi}$) has a proper submodule $\Gamma_{\phi}$ (resp. $\widetilde{\Gamma}_{\phi,\xi}$).
\item[(iii)] If $\phi(I_1)\neq0, \phi(J_1)=0$, then the generic Whittaker $\mathcal{G}$-module $L_{\phi}$ (resp. $\widetilde{\mathcal{G}}$-module $\widetilde{L}_{\phi, \xi}$) has a proper submodule $\Upsilon_{\phi}$ (resp. $\widetilde{\Upsilon}_{\phi,\xi}$).
\end{itemize}
\end{prop}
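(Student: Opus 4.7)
The plan is to exhibit each claimed submodule as the kernel of a natural projection from $L_{\phi}$ (or $\widetilde{L}_{\phi,\xi}$) onto a generic Whittaker module for a suitable quotient Lie algebra of $\mathcal{G}$ (or $\widetilde{\mathcal{G}}$), and to deduce the simplicity assertions of (i) from the corresponding question for generic Whittaker modules over the (twisted) Heisenberg--Virasoro algebra.

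The first step is to check that $\mathcal{I}$, $\mathcal{J}$ and $\mathcal{I}\oplus\mathcal{J}$ are abelian ideals of both $\mathcal{G}$ and $\widetilde{\mathcal{G}}$, which is immediate from the relations $[L_n,I_m]=(m-n)I_{m+n}$, $[H_n,I_m]=I_{m+n}$, $[I_n,I_m]=[I_n,J_m]=0$, and their $J$-analogues. Under the hypothesis of part (ii), $\phi(I_n)=0$ for all $n\geq 1$, so $\phi$ descends to a homomorphism $\bar\phi$ on $(\mathcal{G}/\mathcal{I})^+$; the PBW-basis $\{J^{\mathbf{j}}H^{\mathbf{h}}L^{\mathbf{l}}\bar{w}_{\bar\phi}\}$ shows the generic Whittaker $(\mathcal{G}/\mathcal{I})$-module is nonzero, and the universal property (Remark~\ref{rem for univ}) yields a surjection $L_\phi\twoheadrightarrow L^{\mathcal{G}/\mathcal{I}}_{\bar\phi}$ annihilating $\Gamma_\phi$, hence $\Gamma_\phi\subsetneq L_\phi$. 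Part (iii) is the mirror argument with $\mathcal{J}$ in place of $\mathcal{I}$, and the properness statements of (i) follow by the same recipe applied to $\mathcal{I}\oplus\mathcal{J}$. In the $\widetilde{\mathcal{G}}$-case the quotient $\widetilde{\mathcal{G}}/(\mathcal{I}\oplus\mathcal{J})$ is the twisted Heisenberg--Virasoro algebra $\widetilde{\mathcal{HV}}$, in which $H_0$ is central (since $[L_n,H_0]=0=[H_n,H_0]$); this is what forces the extra $(H_0-c)\widetilde{L}_{\phi,\xi}$ summand in the definition of $\widetilde{\Omega}_{\phi,\xi,c}$, without which no irreducible quotient is possible.

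For the simplicity criteria in part (i), the quotient $\widetilde{L}_{\phi,\xi,c}$ becomes a generic Whittaker module over $\widetilde{\mathcal{HV}}$, and the strategy from Sections~3 and~4.2 transfers. In case (i)(b), where $\xi_3=0$, the Heisenberg generators carry no central charge and $\phi(H_1)$ takes the role that $\phi(I_1)\phi(J_1)$ played in the nonsingular theory: any nonzero submodule contains a Whittaker vector by adapting Lemmas~\ref{4.1} and~\ref{4.5}, and the analogue of Proposition~\ref{vec1} shows that the space of Whittaker vectors is one-dimensional exactly when $\phi(H_1)\neq 0$. When $\phi(H_1)=0$, reducibility is exhibited by a further projection: $\phi$ then vanishes on all of $\mathcal{H}^+$, so $\mathcal{H}=\bigoplus_m\C H_m$ is an ideal of $\widetilde{\mathcal{HV}}$ through which $\phi$ factors, and the corresponding kernel yields a proper submodule.

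The main obstacle is case (i)(a), with $\xi_3\neq 0$. Here the plan is a Sugawara-type redefinition
\[
L'_n := L_n + \alpha_n(H)+\beta_n\cdot 1,
\]
with $\alpha_n(H)$ quadratic in the $H_m$ and $\beta_n\in\C$, coefficients chosen as rational functions of $\xi_2,\xi_3,c$ so that, modulo $\mathcal{I}\oplus\mathcal{J}$ and $H_0-c$, the $L'_n$ commute with the Heisenberg generators appropriately and satisfy clean Virasoro-type brackets. A short evaluation on the cyclic Whittaker generator then produces the expressions $\xi_3\phi(L_1)+(c-\xi_2)\phi(H_1)$ and $2\xi_3\phi(L_2)+\phi(H_1)^2-4\xi_2\phi(H_1)$ as nonzero scalar multiples of $\phi'(L'_1)$ and $\phi'(L'_2)$ for the shifted type $\phi'$, so simplicity of $\widetilde{L}_{\phi,\xi,c}$ reduces to nonsingularity of $\phi'$ on the Virasoro part and yields the stated disjunction. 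The subtle point is not the algebraic identity itself but verifying that the degree-estimation machinery of Lemmas~\ref{2llfffd} and~\ref{2llfffd2} continues to function in the shifted basis; once this is in place, reducibility when both displayed expressions vanish follows by explicitly constructing a second Whittaker vector from the shifted Virasoro structure.
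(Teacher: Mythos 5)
Your identification of $\mathcal{I}$, $\mathcal{J}$, and $\mathcal{I}\oplus\mathcal{J}$ as abelian ideals and your realization of each claimed submodule as the kernel of the natural surjection onto a generic Whittaker module over the corresponding quotient Lie algebra (with $\widetilde{\mathcal{G}}/(\mathcal{I}\oplus\mathcal{J})\cong\widetilde{\mathcal{HV}}$, in which $H_0$ is central, forcing the $(H_0-c)$ summand in $\widetilde{\Omega}_{\phi,\xi,c}$) is exactly how the paper proceeds for the properness claims and for parts (ii) and (iii); it is phrased dually (kernel of a projection rather than an explicit spanning set), but the content is the same. So up to that point the proposal is sound.

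The genuine gap is in the simplicity criteria of part (i). At the corresponding step the paper does no further work of its own: it observes that $L_\phi/\Omega_\phi$ (resp.\ $\widetilde{L}_{\phi,\xi,c}$) is a universal Whittaker module over the (twisted) Heisenberg--Virasoro algebra and then simply cites \cite[Theorem 15]{LZ} for the precise necessary and sufficient conditions, including the two polynomial expressions in $\xi_2,\xi_3,c,\phi(L_1),\phi(L_2),\phi(H_1)$ appearing in (i)(a). You instead attempt to rederive these criteria. For (i)(b) your sketch (adapt Lemmas \ref{4.1}, \ref{4.5} and Proposition \ref{vec1} to $\widetilde{\mathcal{HV}}$) is plausible but not a proof. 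For (i)(a) the situation is worse: the Sugawara-type shift $L'_n=L_n+\alpha_n(H)+\beta_n$ is only an ansatz, and you explicitly acknowledge that the crucial step --- verifying that the degree-estimation machinery of Lemmas \ref{2llfffd} and \ref{2llfffd2} survives the change of basis, and then constructing a second Whittaker vector when both displayed expressions vanish --- remains undone. That is precisely the content packaged in the cited theorem, and without either completing that computation or invoking \cite[Theorem 15]{LZ}, the proof of (i)(a) and the ``only if'' direction of (i)(b) are not established.
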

\begin{proof}
(i) It follows from direct computations that
$$\Omega_{\phi}=\text{span}_{\C}\{H^{\mathbf{h}}L^{\mathbf{l}}J^{\mathbf{j}}I^{\mathbf{i}}w_\phi\mid (\mathbf{j},\mathbf{i},\mathbf{h},\mathbf{l})\in P^{4},\,|\mathbf{i}|+|\mathbf{j}|>0\},$$
and
$$\widetilde{\Omega}_{\phi,\xi, c}=\text{span}_{\C}\{H^{\mathbf{h}}L^{\mathbf{l}}J^{\mathbf{j}}I^{\mathbf{i}}w_\phi, (H_0-c)H^{\mathbf{h}}L^{\mathbf{l}}w_\phi\mid (\mathbf{j},\mathbf{i},\mathbf{h},\mathbf{l})\in P^{4},\,|\mathbf{i}|+|\mathbf{j}|>0\}$$
are proper submodules of $L_{\phi}$ and  $\widetilde{L}_{\phi, \xi}$, respectively. Then $L_{\phi}/ \Omega_{\phi}$ is a Whittaker $\mathcal{G}$-module with trivial action by $\mathcal{I}\oplus \mathcal{J}$, so that it is a universal Whittaker module over the Heisenberg-Virasoro algebra $\mathcal{\widetilde{HV}}$ with trivial action by $C_i$ for $1\leq i\leq 3$. Hence, it follows from \cite[Theorem 15 (2)]{LZ} that $L_{\phi}/ \Omega_{\phi}$ is simple if and only if $\phi(H_1)\neq 0$. Similar arguments yield the second assertion by \cite[Theorem 15]{LZ}.

(ii) It follows from direct computations that
$$\Gamma_{\phi}=\text{span}_{\C}\{H^{\mathbf{h}}L^{\mathbf{l}}J^{\mathbf{j}}I^{\mathbf{i}}w_\phi\mid (\mathbf{j},\mathbf{i},\mathbf{h},\mathbf{l})\in P^{4},\,|\mathbf{i}|>0\}$$
is a proper submodules of $L_{\phi}$. Similarly, $\widetilde{\Gamma}_{\phi,\xi}$ is a a proper submodules of $\widetilde{L}_{\phi, \xi}$.

(iii) It follows from direct computations that
$$\Upsilon_{\phi}=\text{span}_{\C}\{H^{\mathbf{h}}L^{\mathbf{l}}J^{\mathbf{j}}I^{\mathbf{i}}w_\phi\mid (\mathbf{j},\mathbf{i},\mathbf{h},\mathbf{l})\in P^{4},\,|\mathbf{j}|>0\}$$
is a proper submodules of $L_{\phi}$. Similarly, $\widetilde{\Upsilon}_{\phi,\xi}$ is a a proper submodules of $\widetilde{L}_{\phi, \xi}$.
\end{proof}

We finally have the following result asserting that all generic Whittaker modules of singular type are reducible.
\begin{theo}\label{vec4}
Suppose the Lie algebra homomorphism $\phi:\mathcal{G}^+=\widetilde{\mathcal{G}}^{+}\longrightarrow\C$ is singular, and $\xi\in\C^3$. Then
the generic Whittaker $\mathcal{G}$-module $L_{\phi}$ and  $\widetilde{\mathcal{G}}$-module $\widetilde{L}_{\phi, \xi}$
are reducible.
\end{theo}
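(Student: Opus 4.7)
The plan is to split the singular hypothesis $\phi(I_1)\phi(J_1)=0$ into the three mutually exclusive subcases and, in each one, read off a proper nonzero submodule directly from Proposition \ref{vec55}. Since $\phi$ is singular, exactly one of the following holds:
\begin{itemize}
\item[(a)] $\phi(I_1)=\phi(J_1)=0$;
\item[(b)] $\phi(I_1)=0$ and $\phi(J_1)\neq 0$;
\item[(c)] $\phi(I_1)\neq 0$ and $\phi(J_1)=0$.
\end{itemize}

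In case (a), Proposition \ref{vec55}(i) exhibits $\Omega_{\phi}\subseteq L_{\phi}$ and $\widetilde{\Omega}_{\phi,\xi,c}\subseteq \widetilde{L}_{\phi,\xi}$ (for any fixed $c\in\C$) as submodules, with explicit spanning sets in terms of the PBW basis. In cases (b) and (c), Proposition \ref{vec55}(ii)--(iii) furnish the submodules $\Gamma_{\phi}$, $\widetilde{\Gamma}_{\phi,\xi}$ and $\Upsilon_{\phi}$, $\widetilde{\Upsilon}_{\phi,\xi}$, respectively, again with explicit spanning sets. In each case the conclusion of the theorem will follow once we confirm that the submodule at hand is both nonzero and proper.

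Nonzeroness is immediate from the PBW basis \eqref{def2.11}: for instance $I_{-1}w_\phi$ (resp.\ $J_{-1}w_\phi$) is a nonzero basis vector lying in $\Omega_\phi$, $\Gamma_\phi$ (resp.\ $\Omega_\phi$, $\Upsilon_\phi$) as appropriate, and similarly for the tilde-counterparts inside $\widetilde{L}_{\phi,\xi}$. Properness is also built into the descriptions: the spanning sets of $\Omega_\phi$, $\Gamma_\phi$, $\Upsilon_\phi$ contain only monomials $H^{\mathbf h}L^{\mathbf l}J^{\mathbf j}I^{\mathbf i}w_\phi$ with $|\mathbf i|+|\mathbf j|>0$, $|\mathbf i|>0$, or $|\mathbf j|>0$, so by linear independence of the PBW basis the cyclic vector $w_\phi$ is not contained in any of them; the extra generators $(H_0-c)H^{\mathbf h}L^{\mathbf l}w_\phi$ appearing in $\widetilde{\Omega}_{\phi,\xi,c}$ are handled the same way, since $(H_0-c)w_\phi=(\phi(H_0)-c)w_\phi$ is a scalar multiple of $w_\phi$ that can be cancelled when expanding any hypothetical expression for $w_\phi$.

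There is essentially no obstacle beyond Proposition \ref{vec55}; its three parts package exactly the content needed, and the theorem reduces to the observation that the existence of a nontrivial proper submodule in every singular subcase forces reducibility of $L_\phi$ and $\widetilde{L}_{\phi,\xi}$. The only routine verifications remaining are the nonzeroness and properness checks above, both of which follow instantly from the PBW description of the generic Whittaker modules recorded in Section~2.
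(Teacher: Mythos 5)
Your overall strategy mirrors the paper exactly: Theorem \ref{vec4} is simply a corollary of Proposition \ref{vec55}, and your decomposition into the three singular subcases and invocation of the corresponding proper submodule in each case is precisely the intended argument. The nonzeroness checks via PBW basis elements such as $I_{-1}w_\phi$ or $J_{-1}w_\phi$ are correct, as is the properness argument for $\Omega_\phi$, $\Gamma_\phi$, $\Upsilon_\phi$, $\widetilde{\Gamma}_{\phi,\xi}$, $\widetilde{\Upsilon}_{\phi,\xi}$ based on the monomial descriptions.

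However, your properness argument for $\widetilde{\Omega}_{\phi,\xi,c}$ contains a concrete error. You write that $(H_0-c)w_\phi=(\phi(H_0)-c)w_\phi$ is a scalar multiple of $w_\phi$. This is false: $H_0$ lies in $\widetilde{\mathcal{G}}^0$, not in $\widetilde{\mathcal{G}}^+$, so $\phi(H_0)$ is undefined, and in the generic Whittaker module $\widetilde{L}_{\phi,\xi}$ the vector $H_0 w_\phi$ is a PBW basis element genuinely independent of $w_\phi$ (indeed, $H_0$ belongs to $\widetilde{\mathcal{B}}^-$, so it never acts by a scalar on $w_\phi$). The correct reasoning is of a different flavor: since $\mathcal{I}\oplus\mathcal{J}$ is an abelian ideal with $[\widetilde{\mathcal{HV}},\mathcal{I}]\subseteq\mathcal{I}$ and $[\widetilde{\mathcal{HV}},\mathcal{J}]\subseteq\mathcal{J}$, the module $\widetilde{L}_{\phi,\xi}$ is graded by the total number of $I$- and $J$-factors, and $\widetilde{\Omega}_{\phi,\xi,c}$ is a graded submodule; its degree-zero component is $(H_0-c)\,U(\{H_{-m},L_{-m}\mid m\ge 0\})\,w_\phi$, and since $H_0$ commutes with all $H_{-m}$ and $L_{-m}$, this is just the proper ideal $(H_0-c)$ inside the polynomial ring $U(\{H_{-m},L_{-m}\mid m\ge 0\})\,w_\phi$, which cannot contain the unit $w_\phi$. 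With this repair the proof is complete and matches the paper's intended reduction to Proposition \ref{vec55}.
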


As a direct consequence of Proposition \ref{4.6} and Theorem \ref{vec4}, we have the following result which provides a sufficient and necessary condition for a generic Whittaker module to be simple.
\begin{coro}
Suppose $\phi:\mathcal{G}^{+}=\widetilde{\mathcal{G}}^{+}\longrightarrow\C$ is a Lie algebra homomorphism, and $\xi\in\C^3$. Then the generic Whittaker $\mathcal{G}$-module $L_{\phi}$ (resp. $\widetilde{\mathcal{G}}$-module $\widetilde{L}_{\phi, \xi}$) is irreducible if and only if $\phi$ is nonsingular, i.e.,
$\phi(I_1)\phi(J_1)\neq 0$.
\end{coro}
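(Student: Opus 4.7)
The plan is to assemble the corollary from two results already in hand: Proposition \ref{4.6} supplies the ``if'' implication, and Theorem \ref{vec4} supplies the ``only if'' implication. Nothing substantively new will be needed beyond citing these, so the proof reduces to a short bookkeeping argument.

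For the nonsingular direction, I will trace through the logic of Proposition \ref{4.6}. Starting from an arbitrary nonzero submodule $S$ of $L_{\phi}$ (resp. $\widetilde{L}_{\phi,\xi}$), Lemma \ref{4.5} produces a nonzero Whittaker vector $w'\in S$. By Lemma \ref{2llfff}, $w'$ is automatically of type $\phi$, and then Proposition \ref{vec1} (resp. Proposition \ref{vec2}) forces $w'\in\C w_\phi$ (resp. $w'\in\C\overline{w}_\phi$, using that the central generators $C_s$ act on $\widetilde{L}_{\phi,\xi}$ by the scalars $\xi_s$, so $\C[C_1,C_2,C_3]\overline{w}_\phi$ collapses to $\C\overline{w}_\phi$). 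Since the cyclic generator thus lies in $S$, we conclude $S$ equals the full module, giving irreducibility.

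For the singular direction, I will cite Theorem \ref{vec4}, whose content is essentially unpacked by Proposition \ref{vec55}: exactly one of the three subcases
$$\phi(I_1)=\phi(J_1)=0,\quad \phi(I_1)=0\neq\phi(J_1),\quad \phi(I_1)\neq0=\phi(J_1)$$
holds, and in each Proposition \ref{vec55} exhibits an explicit nonzero proper submodule ($\Omega_\phi$, $\Gamma_\phi$, or $\Upsilon_\phi$ inside $L_\phi$, and $\widetilde{\Omega}_{\phi,\xi,c}$, $\widetilde{\Gamma}_{\phi,\xi}$, or $\widetilde{\Upsilon}_{\phi,\xi}$ inside $\widetilde{L}_{\phi,\xi}$). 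Thus the generic Whittaker module is reducible.

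The step I expect to be the ``main obstacle'' does not lie in this corollary at all but in the input it rests on, namely the nonsingular characterization of Whittaker vectors established in Proposition \ref{vec1}. There the hypothesis $\phi(I_1)\phi(J_1)\neq0$ is crucially used to separate the two coefficient systems produced by the $I_{k+1}$- and $J_{k+1}$-commutator expansions of Lemmas \ref{2llfffd} and \ref{2llfffd2} (and analogously for the $L_{k+1}$- and $H_{k+1}$-expansions), forcing the leading-degree coefficients to vanish. At the level of the corollary itself, however, the deduction is purely formal: the biconditional is a clean combination of the two preceding results, and no further computation is required.
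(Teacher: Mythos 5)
Your proof is correct and matches the paper's: the corollary is stated there precisely as a direct consequence of Proposition \ref{4.6} (nonsingular $\Rightarrow$ irreducible) and Theorem \ref{vec4} (singular $\Rightarrow$ reducible), which is exactly the assembly you carry out. Your additional unpacking of the supporting lemmas is accurate but not needed for this particular statement.
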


\subsection*{Acknowledgements} Y.F. Yao are grateful to Professor Kaiming Zhao for stimulating discussion and helpful suggestion which improves the manuscript.

\subsection*{Data availability} Data sharing is not applicable to this article as no new data were created or analyzed in this study.

\end{document}